\theoremstyle{plain}
\newtheorem{theorem}{Theorem}[section]
\newtheorem{lemma}[theorem]{Lemma}
\newtheorem{prop}[theorem]{Proposition}
\newtheorem{corollary}[theorem]{Corollary}
\theoremstyle{definition}
\newtheorem{definition}[theorem]{Definition}
\theoremstyle{remark}
\newtheorem{remark}{Remark}
\newcommand{\N}{\mathbb N}
\newcommand{\R}{\mathbb R}
\renewcommand{\epsilon}{\varepsilon}
\newcommand{\cS}{\mathcal{S}}
\newcommand{\cO}{\mathcal{O}}
\newcommand{\cQ}{\mathcal{Q}}
\newcommand{\cL}{\mathcal{L}}
\renewcommand{\cS}{\mathcal{S}}
\newcommand*{\dist}[2]{\distance ({#1},{#2})}
\DeclareMathOperator{\pos}{Pos}
\DeclareMathOperator{\distance}{d}
\DeclareMathOperator{\Sing}{Sing}
\DeclareMathOperator{\jac}{Jac}
\DeclareMathOperator{\gram}{Gram}
\def \Gm {\widetilde{G}_-}
\setlist[enumerate,1]{label={(\roman*)},ref={\thetheorem (\roman*)}}
\def\NN{\mathbb{N}}
\def\RR{\mathbb{R}}
\def\RRg{\RR[\vb X]}
\newcommand{\cdummy}{\cdot}
\def\cst{\mathfrak{c}}
\def\eps{\epsilon}
\def\gam{\bm \gamma}
\def\lam{\bm \lambda}
\def\g{\vb g}
\def\h{\vb h}
\newcommand\inner[2]{\langle #1,#2·\rangle}
\def\DD{\dist{\vb g}{\Sing}}
\DeclareMathOperator{\diam}{diam}
\newcommand{\ca}[1]{\mathcal{#1}}
\begin{document}
\title{On Łojasiewicz Inequalities and the Effective Putinar's Positivstellensatz\thanks{This work has been partially supported by European Union’s Horizon 2020 research and innovation programme
under the Marie Skłodowska-Curie Actions, grant agreement 813211 (POEMA)}}
\author{Lorenzo Baldi\footnote{Inria d'Universit\'e C\^ote d'Azur, Sophia Antipolis and Sorbonne Universit\'e, CNRS, F-75005, Paris, France; partially funded by the Paris \^{I}le-de-France Region, under the grant agreement 2021-02--C21/1131}
, Bernard Mourrain\footnote{Inria d'Universit\'e C\^ote d'Azur, Sophia Antipolis},
Adam Parusiński\footnote{LJAD, UMR CNRS 7351. Universit\'e C\^ote d'Azur, Nice}
}
\date{\today}

\maketitle

\begin{abstract}
The representation of positive polynomials on a semi-algebraic set in terms of sums of squares is a central question in real algebraic geometry,  which the Positivstellensatz answers. In this paper, we study the effective Putinar's Positivestellensatz on a compact basic semi-algebraic set $S$ and provide a new proof and new improved bounds on the degree of the representation of positive polynomials. These new bounds involve a parameter $\eps$ measuring the non-vanishing of the positive function, the constant $\cst$ and exponent $L$ of a \L ojasiewicz inequality for the semi-algebraic distance function associated to the inequalities $\vb g = (g_1, \dots , g_r)$ defining $S$. They are polynomial in $\cst$ and $\eps^{-1}$ with an exponent depending only on $L$.
We analyse in details the \L ojasiewicz inequality when the defining inequalities $\vb g$ satisfy the Constraint Qualification Condition.  We show that, in this case, the \L ojasiewicz exponent $L$ is $1$ and we relate the \L ojasiewicz constant $\cst$ with the distance of $\vb g$ to the set of singular systems. 
\end{abstract}

\section{Introduction}\label{sec:1}
A fundamental difference between Algebraic Geometry and Real Algebraic Geometry is the use of the ordering of the real numbers. A central question in Real Algebraic Geometry is thus how to characterise real polynomials satisfying \emph{non-negativity} and \emph{positivity} conditions on a given domain, and not only those vanishing on it. This problem has attracted a lot of research in the last decades, also due to the connections with global optimization techniques. See e.g. \cite{lasserre_global_2001, marshall_positive_2008, laurent_sums_2009, lasserre_introduction_2015} or more recently \cite{powers_certificates_2021}. The purpose of this article is to present a quantitative version of Putinar's Positivstellensatz, a representation theorem for positive polynomials on a compact domain defined by polynomial inequalities.

The first example of globally non-negative polynomials $f \in \RRg = \RR[X_1, \dots, X_n]$ are the \emph{Sums of Squares} polynomials:
\[
    \Sigma^2 = \Sigma^2 [\vb X] \coloneqq \big\{ \, f \in \R[\vb{X}] \mid \exists r
\in \N, \ g_i \in \R[\vb{X}] \colon f = g_1^2 + \dots + g_r^2
\,\big\}.
\]
It is known since Hilbert \cite{hilbert_ueber_1888} that the convex cone of globally non-negative polynomials $\pos(\RR^n)$ contains properly the Sums of Squares (SoS) cone for $n \ge 2$, and the first explicit example of positive, non-SoS polynomial was given by Motzkin \cite{motzkin_t_s_arithmetic-geometric_1967}. The complete description of $\pos(\RR^n)$ in terms of SoS was proven by Artin \cite{artin_uber_1927}: $f \in \pos(\RR^n)$ if and only if $f$ can be written as a ratio of two SoS polynomials. This introduces a \emph{denominator} in the description of $f$.

In this paper we investigate the description of positive polynomials on \emph{basic closed semi-algebraic} sets:
\[
    S=\cS(\vb g) = \cS(g_1, \dots , g_r) \coloneqq \{ \, x \in \RR^n \mid g_1(x)\ge 0, \dots, g_r(x)\ge 0 \, \},
\]
in the particular case where $S$ is compact. Natural subcones of the cone $\pos(S)$ of non-negative polynomials on $S$ are the \emph{quadratic module}:
\[
    Q = \cQ(\vb g) \coloneqq \Sigma^2 + \Sigma^2 \cdot g_1+ \dots + \Sigma^2 \cdot g_r
\]
and the \emph{preordering} {\begin{align*}    
O &=\cO(\vb g) \coloneqq \cQ (\prod_{j \in J} g_j \colon  J \subset \{ 1,\dots , r \})\\
 &= \Sigma^2 + \Sigma^2 \cdot g_1+ \dots + \Sigma^2 \cdot g_r + \Sigma^2 \cdot g_1 g_2 + \dots + \Sigma^2 \cdot g_1 \dots g_r
\end{align*}}
While to characterize non-negative polynomials in terms of SoS and preorderings a denominator is necessary \cite{krivine_anneaux_1964, stengle_nullstellensatz_1974}, Schm\"udgen \cite{schmudgen_thek-moment_1991} showed that a denominator free representation exists for strictly positive polynomials on a basic compact semi-algebraic sets.
\begin{theorem}[{Schmüdgen's Positivstellensatz \cite{schmudgen_thek-moment_1991}}]
    \label{thm::schmudgen}
    Let $\cS(\vb g)$ be a compact basic semi-algebraic set. Then $f>0$ on $\cS(\vb g)$ implies $f\in \cO(\vb g)$.
\end{theorem}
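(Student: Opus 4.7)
The plan is to combine two classical ingredients: the rational Krivine--Stengle Positivstellensatz, which already yields a representation $qf = 1+p$ with $p,q\in \cO(\vb g)$ whenever $f>0$ on $\cS(\vb g)$, and an abstract archimedean representation theorem (Kadison--Dubois--Jacobi), which turns this rational certificate into a denominator-free one $f\in \cO(\vb g)$ as soon as the preordering is \emph{archimedean}.

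First I would invoke the Krivine--Stengle Positivstellensatz to obtain $p,q\in \cO(\vb g)$ with $qf = 1+p$. This uses only that $f$ does not vanish on $\cS(\vb g)$; it says that $f$ becomes invertible after a suitable localisation of $\cO(\vb g)$, but leaves the denominator $q$ to be removed.

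The core step is to prove that the compactness of $\cS(\vb g)$ forces $\cO(\vb g)$ to be \emph{archimedean}, in the sense that there exists a constant $N>0$ with $N - \sum_{i=1}^n X_i^2 \in \cO(\vb g)$. Compactness supplies an $N$ for which $N - \sum_{i=1}^n X_i^2 > 0$ on $\cS(\vb g)$, and Krivine--Stengle applied to this polynomial yields $\tilde p,\tilde q \in \cO(\vb g)$ with $\tilde q\,(N - \sum_i X_i^2) = 1 + \tilde p$. The delicate part is to eliminate~$\tilde q$: following Schmüdgen, one bounds each coordinate $X_i$ separately on $\cS(\vb g)$, applies the rational Positivstellensatz coordinatewise to $M_i^2 - X_i^2 > 0$, and then uses algebraic identities such as $4ab = (a+b)^2 - (a-b)^2$ together with the closure of $\cO(\vb g)$ under multiplication by squares to convert the rational certificates into polynomial ones, finally patching them together into $N - \sum_i X_i^2 \in \cO(\vb g)$.

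Finally, with the archimedean property in hand, I would invoke the Kadison--Dubois (Jacobi) abstract Positivstellensatz: the real character space of the archimedean preordering $\cO(\vb g)$ can be identified with $\cS(\vb g)$ via evaluation at points, and the theorem asserts that any element of $\R[\vb X]$ strictly positive on this character space already lies in the preordering. Applied to $f$ this gives $f \in \cO(\vb g)$. The main obstacle is undoubtedly the denominator-clearing step that establishes the archimedean property; the Krivine--Stengle input is standard, and the Kadison--Dubois step is essentially a black box, but the passage from a rational certificate for $N - \sum_i X_i^2$ to a polynomial one is exactly where the compactness hypothesis enters in an essential way, and constitutes the true content of Schmüdgen's original argument.
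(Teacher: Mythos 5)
The paper does not prove this statement: Schm\"udgen's Positivstellensatz is quoted as a classical result with a citation to \cite{schmudgen_thek-moment_1991}, so there is no internal proof to compare yours against. Your outline is the standard \emph{algebraic} proof (W\"ormann's argument, rather than Schm\"udgen's original operator-theoretic one via the moment problem), and its architecture is sound: the entire content lies in showing that compactness of $\cS(\vb g)$ forces the preordering $\cO(\vb g)$ to be archimedean, after which the Kadison--Dubois/Jacobi representation theorem applied directly to $f$ concludes. Two remarks. First, your opening application of Krivine--Stengle to $f$ itself, producing $qf=1+p$, is never used in the final argument and can be deleted; the only Positivstellensatz input you need is the certificate for $N-\sum_{i=1}^n X_i^2$. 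Note also that the certificate $qf=1+p$ with $p,q\in\cO(\vb g)$ characterizes \emph{strict positivity} of $f$ on $\cS(\vb g)$, not mere non-vanishing as you write. Second, the denominator-clearing step is, as you say, the crux, and it is the thinnest part of your sketch: the usual route observes that the quadratic module $\Sigma^2+(N-\sum_i X_i^2)\Sigma^2$ is itself archimedean, so the multiplier $\tilde q$ is dominated by a constant modulo that module, and an induction on degree (using identities of the type $4ab=(a+b)^2-(a-b)^2$ that you mention) then upgrades $\tilde q\,(N-\sum_i X_i^2)=1+\tilde p$ to membership of $N'-\sum_i X_i^2$ in $\cO(\vb g)$ for some $N'$. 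The phrase ``patching them together'' conceals a genuine argument, but the ingredients you name are the correct ones, so I would count this as a correct high-level proof with one step left at the level of a pointer to the literature.
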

This result greatly simplifies the representation. However, the representation still needs a number of SoS terms that is exponential in {$r$, the number} of defining inequalities of $S$, since the conclusion of the theorem is $f \in \cO(\vb g)$ and not $f \in \cQ(\vb g)$. The problem is solved when one introduces the \emph{Archimedean} property.
\begin{definition}
  \label{def::archimedean}
  Denote $\norm{\vb X}_2^2 = X_1^2 + \dots + X_n^2$. We say that a quadratic module $Q$ is \emph{Archimedean} if there exists $R \in \R$ such that $R^2-\norm{\vb X}_2^2 \in Q$.
\end{definition}
Notice that the Archimedean condition for $Q = \cQ(\vb g)$ implies the compactness of $S = \cS(\vb g)$. Moreover, as a corollary of \Cref{thm::schmudgen} we have that $\cO(\vb g)$ is Archimedean if $S$ is compact. This result is not true for quadratic modules: there are examples with $\cS(\vb
g)$ compact but $\cQ(\vb g)$ not Archimedean, see e.g.
\cite[ex.~6.3.1]{prestel_positive_2001}.

With the Archimedean condition, we can introduce the representation that we will study through the paper, based on the following theorem:
\begin{theorem}[{Putinar's Positivstellensatz \cite{putinar_positive_1993}}]
    \label{thm::putinar}
    Let $\cS(\vb g)$ be a basic semi-algebraic set. If $\cQ(\vb g)$ is Archimedean, then $f>0$ on $\cS(\vb g)$ implies $f\in \cQ(\vb g)$.
\end{theorem}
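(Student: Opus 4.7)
The plan is to argue by contradiction using Hahn--Banach separation, combined with a moment-problem reconstruction in the spirit of Haviland's theorem. Suppose $f \notin Q := \cQ(\vb g)$. Since $Q$ is a convex cone in $\R[\vb X]$, Hahn--Banach (applied in the finest locally convex topology on $\R[\vb X]$, or equivalently on an exhausting sequence of finite-dimensional subspaces) yields a nonzero linear functional $L \colon \R[\vb X] \to \R$ with $L(q) \geq 0$ for every $q \in Q$ and $L(f) \leq 0$.

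The first step is to normalize so that $L(1) = 1$. A key consequence of the Archimedean hypothesis $R^2 - \norm{\vb X}_2^2 \in Q$ is that for every $p \in \R[\vb X]$ there exists $N = N(p) \in \R$ with $N \pm p \in Q$. If $L(1) = 0$, applying $L$ to $N - p^2 \in Q$ would give $L(p^2) \leq 0$; combined with $p^2 \in Q$ (so $L(p^2) \geq 0$) this forces $L(p^2) = 0$ for every $p$, and polarization (or Cauchy--Schwarz applied to the positive semidefinite bilinear form $(p,q) \mapsto L(pq)$) then forces $L \equiv 0$, contradicting the choice of $L$. Hence $L(1) > 0$ and we rescale.

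The heart of the argument is to extend $L$ to a positive continuous functional on $C(K)$, where $K = \{x \in \R^n \colon \norm{x}_2 \leq R\}$. The bound $L(p^2) \leq N(p^2)$, together with iterated Cauchy--Schwarz on $(p,q) \mapsto L(pq)$, yields $|L(p)| \leq C \sup_{x \in K} |p(x)|$ for every $p \in \R[\vb X]$. By Stone--Weierstrass, polynomials are dense in $C(K)$, so $L$ extends uniquely to a continuous linear functional $\widetilde L$ on $C(K)$; positivity of $L$ on $\Sigma^2$ together with polynomial density ensures that $\widetilde L$ is nonnegative on nonnegative continuous functions. Riesz representation then furnishes a positive Borel measure $\mu$ on $K$ with $\widetilde L(h) = \int_K h \, d\mu$.

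Finally, for each $i$ the inequalities $L(g_i p^2) \geq 0$ translate into $\int g_i p^2 \, d\mu \geq 0$ for all $p$, forcing $\supp(\mu) \subset \{g_i \geq 0\}$; intersecting over $i$ gives $\supp(\mu) \subset \cS(\vb g)$. Since $\mu(K) = L(1) = 1 > 0$ and $f > 0$ on $\cS(\vb g)$, we conclude $L(f) = \int_{\cS(\vb g)} f \, d\mu > 0$, contradicting $L(f) \leq 0$. The principal obstacle is the continuity/positivity extension to $C(K)$: this is the step that genuinely uses the Archimedean hypothesis, and it is where the difference between Putinar's and Schm\"udgen's theorems lives --- compactness of $\cS(\vb g)$ alone does not suffice to bound $L$ on the sup-norm of $K$.
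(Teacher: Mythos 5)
The paper does not actually prove \Cref{thm::putinar}: it quotes Putinar's theorem as a black box and devotes its own machinery (\L ojasiewicz inequalities, plateau functions, Bernstein/Polya bounds) to the \emph{effective} version \Cref{thm::effective_putinar}, by an entirely different constructive route. So your proposal can only be measured against the classical functional-analytic proof, whose architecture you do follow: separation, normalization $L(1)=1$, representation of $L$ by a measure, localization of the support. The normalization step, the support argument and the final contradiction are correct as written, and your observation that Archimedeanity gives $N \pm p \in \cQ(\vb g)$ for every $p$ (i.e.\ that $1$ is an order unit) is exactly what legitimizes the separation step --- a convex cone with empty interior in an infinite-dimensional space cannot in general be separated from an exterior point, and the ``finite-dimensional exhaustion'' variant needs a compactness argument to glue the truncated functionals.

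The genuine gap is the step you yourself flag as the crux: the bound $|L(p)| \le C \sup_{x\in K}|p(x)|$. Iterated Cauchy--Schwarz only gives $|L(p)| \le L(p^{2^k})^{2^{-k}} \le N(p^{2^k})^{2^{-k}}$, where $N(q)=\inf\{N : N\pm q\in \cQ(\vb g)\}$ is the order-unit seminorm. The only a priori control on $N$ available at this stage is through coefficients, $N(\sum_\alpha a_\alpha \vb X^\alpha)\le \sum_\alpha |a_\alpha|\,C_0^{|\alpha|}$, and the resulting spectral-radius limit $\lim_k \|p^{2^k}\|^{2^{-k}}$ for this submultiplicative norm is the supremum of $|p|$ over a \emph{complex} polydisc, not over the real ball $K$: for $p=(1-X_1^2)^m$ that supremum is at least $2^m$ while $\sup_K|p|=1$, so no uniform constant $C$ closes this route. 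Worse, the inequality that would directly give $N(p)\le \sup_K|p|+\epsilon$, namely $\sup_K|p|+\epsilon\pm p\in \cQ(\vb g)$, is essentially Putinar's theorem for the ball, so the argument is circular as stated. The standard repair is to run the GNS construction on the positive semidefinite form $(p,q)\mapsto L(pq)$: the multiplication operators by $X_i$ are symmetric and bounded by $R$ because $L((R^2-X_i^2)p^2)\ge 0$ (note $R^2-X_i^2=(R^2-\norm{\vb X}_2^2)+\sum_{j\ne i}X_j^2\in \cQ(\vb g)$), and the spectral theorem for the resulting commuting self-adjoint tuple produces the representing measure $\mu$ on $K$; your support and contradiction arguments then go through verbatim. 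Alternatively one can invoke Jacobi's representation theorem (Kadison--Dubois), proved by non-functional-analytic means. Either way, the passage from positivity on $\cQ(\vb g)$ to a measure is where the real work lives, and it is not supplied by the Cauchy--Schwarz/Stone--Weierstrass sketch.
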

 
 The aim of the paper is to present a quantitative version of \Cref{thm::putinar}, giving an upper degree bound for the representation {$f = s_0 + s_1 g_1 + \dots + s_r g_r \in \cQ(\vb g)$} of a polynomial $f$ positive on $\cS(\vb g)$. This bound is presented in \Cref{thm::effective_putinar}. It involves $\epsilon = \epsilon(f)$, a measure for how $f$ is close to having a zero on $S$ {(see \Cref{parameters} (iv) for the definition)}, and a \L ojasiewicz exponent $L$ and coefficient $\cst$, that compare the behavior of $f$ and of the inequalities $g_1, \dots, g_r$ on a scaled simplex $D$ containing $S$.
{The \L ojasiewicz exponent and constant are defined in \Cref{def:loja}}
 
 {The dependence of degree bounds on continuous parameters, such as $\epsilon$ above, is typical of real algebraic geometry. In particular, lower degree bounds for the Positivstellens\"atze, showing the degrees of the SoS multipliers in the representation have to go to infinity as $\epsilon \to 0$, have been known since the work of Stengle \cite{Stengle1996}, for a special univariate example. Only recently, another quantitative lower degree bound in $\epsilon$ appeared in \cite{Baldi2024} for unit boxes. Let us also recall that for $\epsilon = \epsilon(f) = 0$, i.e. when the minimum of $f$ on $\cS(\vb g)$ is zero, there might be no representation of $f$ in the preordering or in the quadratic module (see e.g. \cite[Prop.~29]{Baldi2024}).}

 The problem of determining degree bounds for the Positvstellens\"ate is known as the \emph{Effective Putinar's Positivstellensatz} or \emph{Effective Schm\"udgen's Positivstellensatz}. While for lower degree bounds the only known results are above-mentioned \cite{Stengle1996,Baldi2024}, upper degree bounds have been extensively studied.
 
 For a special univariate example, the first upper degree bound can be found in \cite{Stengle1996}. 
 For general semialgebraic sets, upper degree bounds for the Effective Schm\"udgen's Positivstellensatz has been investigated for the first time by Schweighofer in \cite{schweighofer_complexity_2004}, while the Effective Putinar's Positivstellensatz by Nie and Schweighofer in \cite{nie_complexity_2007}. The bound obtained for Schm\"udgen's theorem were significantly better than those for Putinar's theorem: \cite{schweighofer_complexity_2004} has a polynomial dependence in $\epsilon$, while \cite{nie_complexity_2007} has an exponential one. It was an open question until recently if a polynomial dependence on $\epsilon$ was possible for Putinar's theorem: the first two authors gave a positive answer in \cite{baldi_moment_2021}.
 Upper bounds have also been studied for specific semi-algebraic sets, where special techniques can be applied to obtain better bounds: see for instance \cite{laurent_effective_2021,Baldi2024} for Schm\"udgen's and Putinar's theorems on the unit box, \cite{slot_sum--squares_2021} for Schm\"udgen's theorem on the unit ball and simplex and
\cite{fang_sum--squares_2020} for Putinar's theorem on the unit sphere.

{\L}ojasiewicz inequalities play a central role in the study of the Effective Positivstellens\"atze for general semialgebraic sets. Classical {\L}ojasiewicz inequalities are often stated as follows, see {\cite[cor.~2.6.7]{bochnak_real_1998}}.

\begin{theorem}
  \label{thm::lojasiewicz}
    Let $B$ be a closed bounded semi-algebraic set of $\RR^n$ and let $f, g$ be two continuous semi-algebraic functions from $B$ to $\RR$ such that $f^{-1}(0) \subset g^{-1}(0)$. Then there exists $c, L \in \RR_{>0}$ such that $\forall x \in B$:
\begin{align}\label{eq:loj_inequality}
      \abs{g(x)}^{L} \le c \abs{f(x)}.
\end{align}
 \end{theorem}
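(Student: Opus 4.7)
The plan is to reduce the two-variable statement to a question about a single semialgebraic function of one variable, and then exploit the Puiseux behaviour of such functions near zero. Introduce the ``modulus''
\[
h(\eps) := \sup\{\, |g(x)| : x \in B,\ |f(x)| \leq \eps \,\}, \qquad \eps \geq 0,
\]
with the convention $\sup \emptyset = 0$. Compactness of $B$ together with continuity of $f, g$ make the supremum finite and attained whenever the defining set is non-empty; the graph of $h$ is semialgebraic by Tarski--Seidenberg; $h$ is monotone non-decreasing; and the hypothesis $f^{-1}(0) \subset g^{-1}(0)$ forces $h(0) = 0$.

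Next I would verify that $h$ is continuous at $0$. Picking $\eps_n \to 0^+$ and $x_n \in B$ attaining $h(\eps_n) = |g(x_n)|$, a compactness extraction yields a limit point $x^\star$ with $f(x^\star) = 0$; the hypothesis then forces $g(x^\star) = 0$, and hence $h(\eps_n) \to 0$. Now $h : [0, a] \to \RR_{\geq 0}$, with $a := \max_B |f|$, is a semialgebraic function of one variable, continuous at $0$ with $h(0) = 0$. Invoking the standard structure theorem for one-variable semialgebraic functions (Puiseux expansion, equivalently the piecewise monotonicity/algebraicity theorem), one obtains $\eps_0, C > 0$ and a rational $\alpha > 0$ such that $h(\eps) \leq C \eps^\alpha$ on $[0, \eps_0]$; rewriting, $|g(x)|^{1/\alpha} \leq C^{1/\alpha} |f(x)|$ for every $x \in B$ with $|f(x)| \leq \eps_0$.

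To extend this globally, I would use the trivial bound $|g(x)| \leq M := \max_B |g|$, which gives $|g(x)|^{1/\alpha} \leq (M^{1/\alpha}/\eps_0)\, |f(x)|$ whenever $|f(x)| > \eps_0$. Setting $L := 1/\alpha$ and $c := \max\{C^{1/\alpha},\, M^{1/\alpha}/\eps_0\}$ then yields the inequality on all of $B$. The substantive step is the appeal to the Puiseux/monotonicity theorem for semialgebraic functions of one variable, which is precisely where the semialgebraicity hypothesis is genuinely used; everything else is routine compactness plus Tarski--Seidenberg bookkeeping. The degenerate case $f^{-1}(0) = \emptyset$ is even simpler: $|f|$ attains a positive minimum on compact $B$ and the inequality then holds with $L=1$ outright.
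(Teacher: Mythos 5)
The paper does not prove this statement; it is quoted as a classical result with a citation to \cite{bochnak_real_1998}, cor.~2.6.7. Your argument is correct and is essentially the standard proof underlying that reference: reduce to the one-variable modulus function $h(\eps)=\sup\{\abs{g(x)} : x\in B,\ \abs{f(x)}\le\eps\}$, which is semi-algebraic by Tarski--Seidenberg and vanishes continuously at $0$ by compactness and the hypothesis $f^{-1}(0)\subset g^{-1}(0)$, then apply the Puiseux structure theorem for one-variable semi-algebraic functions near $0$ and patch in the trivial bound away from $f^{-1}(0)$.
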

 {One can show that the smallest exponent $L$ for which the inequality  \eqref{eq:loj_inequality} holds always exists and is a strictly positive rational number (see \cite{lojasiewicz_1959}). It is called the \emph{{\L}ojasiewicz exponent}. Then, having $L$ fixed, the smallest $c>0$ such that this inequality holds, that also always exists, is called the \emph{{\L}ojasiewicz constant} (relative to $L$).}
 
 We apply the above {\L}ojasiewicz Inequality to three functions vanishing on 
 $S$, namely the function $F(x)$ defined in 
 \eqref{eq:defF}, the semi-algebraic distance to $S$, denoted $G(x)$ and defined in \eqref{eq:defG}, and the Euclidean distance function to $S$ denoted $E(x)$. 
 The Euclidean distance 
 to $S$, denoted $E(x)$, plays an auxiliary but fundamental role. 
 
In Theorem \ref{thm:main_theorem}, under the Constraint Qualification Conditions assumption, we give the {\L}ojasiewicz Inequality bound on $E(x)$ in terms of  $G(x)$.  It is known by \cite{baldi_moment_2021} that in this case $L=1$, and we give 
in Theorem \ref{thm:main_theorem} an explicit bound on the {\L}ojasiewicz constant. While the case of convex inequalities has been analyzed in the optimization community in \cite{lewis_error_1998},
the authors do not know any other reference where 
the {\L}ojasiewicz constant has been studied for general $\vb g$.  

{We also remark that the \L ojasiewicz inequality has been used to solve other problems in semialgebraic geometry constructively. For instance, in \cite{AverkovBröcker+2012+447+459} the \L ojasiewicz inequality is used to obtain bounds on the minimal number of polynomial inequalities defining a basic, closed semialgebraic set, in particular for the case of polyhedra.}

Finally, let us recall that the  {\L}ojasiewicz Inequality for the distance function to the zero set of a polynomial or a real analytic function is the original one and was introduced in the polynomial case by Hörmander \cite{hormander_1958} and in the analytic case by {\L}ojasiewicz \cite{lojasiewicz_1959}, in both cases to show the divisibility of Schwartz distributions by these functions.  Therefore such an inequality is sometimes called Hörmander-{\L}ojasiewicz Inequality.  

\subsection{Contributions and outline}
 In this paper, we develop a new analysis of the Effective Positivstellensatz, improving the existing upper bounds on the degree of representation of positive polynomials 
 and simplifying their descriptions.
 The approach improves the degree bounds obtained in  \cite{nie_complexity_2007, averkov_constructive_2013, kurdyka_convexifying_2015} from exponential bounds in $\epsilon^{-1}$ to a polynomial bound in $\epsilon^{-1}$, and the results in \cite{baldi_moment_2021}, by removing the dependency of the exponent of $\epsilon^{-1}$ on the dimension $n$. 

To obtain these new improved bounds, we analyse the \L ojasiewicz inequality connecting the semi-algebraic distance function $G$ associated to $\vb g$ and the distance function $F$ associated to $f$, that can be used directly in the proof of the Effective Positivstellensatz. Using a Markov inequality, we deduce a \L ojasiewicz inequality, which exponent is independent of $f$. The proof technique is similar to the one in \cite{baldi_moment_2021}. The main difference is the choice of a simpler semi-algebraic set containing $S$ that we reduce to. While in \cite{baldi_moment_2021} a unit box containing $S$ is used and a recent Effective Schm\"udgen's Positivstellensatz \cite{laurent_effective_2021} is applied, in the main  \Cref{thm::effective_putinar} we reduce to a simplex and apply an effective version's of Polya's theorem \cite{PowersnewboundPolya2001} (or the convergence property of the control polygon for the Bernstein basis). {In the study of the effective Positivstellens\"atze, another technique to reduce to the case of simplices has also been exploited in \cite{Schweighofer2002} and more recently in \cite{https://doi.org/10.48550/arxiv.2207.02748}.}

We analyze in detail the \L ojasiewicz inequality between $F$ and $G$ in the regular case, i.e. when the defining inequalities $\vb g$ satisfy the Constraint Qualification Condition. 
The main contribution in the regular case is \Cref{thm:main_theorem}, where the exponent is proven to be equal to one and an explicit bound for the constant in terms of geometric properties of the $\vb g$ is given. In \Cref{thm::CQC_distance_sing} we describe another interpretation of the constant as the distance from $\vb g$ to the set of singular systems, in the spirit of \cite{cucker_numerical_2009}.
 

In the remaining part of \Cref{sec:1}, we provide notation and preliminary material, and recall approximation properties needed in the proof of the Effective Positivstellensatz.
In \Cref{sec::loja}, we study \L ojasiewicz inequalities between different distance functions and analyse in detail {\L}ojasiewicz exponent and constant, when Constraint Qualification Conditions hold.
In \Cref{sec::effective_positiv} we prove the Effective Positivstellensatz and the new bound in \Cref{thm::effective_putinar}.
We conclude with additional remarks and perspectives in \Cref{sec:3.3}.




\subsection{Notation and conventions}
Let $\R[X_1,\ldots,X_n]=\R[\vb X]$ be the ring of polynomials in the variables $\vb X=(X_1, \ldots, X_n)$ with coefficients in $\R$. For $g_1,\dots,g_r \in \RRg$, let
$S = \cS(\vb g) \coloneqq \{\, x \in \RR^n \mid g_i(x) \ge 0,\ \forall i\in \{1,\dots , r\} \, \}$ be the basic semi-algebraic set defined by $g_1,\dots,g_r$.

Recall that a quadratic module $\cQ(\vb g)$ is called \emph{Archimedean}
if $R^2 - \norm{\vb X}_2^2 \in \cQ(\vb g)$ for some $R \in \RR$, see \Cref{def::archimedean}.
However, to simplify the proofs we assume that $R=1$.

\subsection*{Normalization assumption} 
\begin{equation}
  \label{assum::norm}
  \quad 1 - X_1^2 - \dots - X_n^2 \in \cQ(\vb g)s
\end{equation}
We can always be in this setting by a change of variables if we start with an Archimedean quadratic module.
Indeed, if $R^2 - \norm{\vb X}_2^2 \in \cQ(\vb g)$ then $1-\norm{\vb X}_2^2 \in  \cQ(\vb g(R \vb X))$
(i.e. the quadratic module generated by $g_i(R X_1, \dots, R X_n)$). Notice also that the normalization
assumption implies that $S$ is contained in the unit ball centered at the origin.

In the paper, we denote $$D \coloneqq \left\{ \, x\in \R^n\mid 1+ x_1\ge 0, \ldots, 1+x_n\ge 0, \sqrt{n} - x_1- \cdots- x_n\ge 0 \,\right\}$$ a simplex, containing the unit ball. Notice that $D \subset [-1,1+\sqrt{n}]^n$ and, by the normalization assumption, $S \subset D$.

For $f\in \R[\vb X]$ 
of degree $d = \deg(f)$ and $m \ge d$, we write $f= \sum_{\alpha\in \N^{n},|\alpha| \le  m} f_{m,\alpha} B_{m,\alpha}^D(\vb X)$ where $(B^D_{m,\alpha}(\vb X))_{\abs{\alpha}\le m}$ is the Bernstein basis in degree $m$ on $D$:
$$B^D_{m,\alpha}(\vb X)= {m \choose \alpha} (n+\sqrt{n})^{-d} (\sqrt{n}- X_1- \cdots- X_n)^{m-|\alpha|} (1+X_1)^{\alpha_1} \cdots (1+X_n)^{\alpha_n}$$
where ${m \choose \alpha}$ denotes the multinomial coefficient.

\subsection*{Norms} Hereafter we introduce the norms that will be used through the article.
\begin{itemize}
    \item For $f= \sum_{\alpha\in \N^{n},|\alpha| \le  m} f_{m,\alpha} B_{m,\alpha}^D(\vb X) \in \RRg$ and $m \ge \deg(f)$, we denote $\norm{f}_{B,m}$ the $L^{\infty}$ norm of $f$ with respect to the Bernstein basis:
    $$ \norm{f}_{B,m} = \max_{\abs{\alpha} \le m} \abs{f_{m,\alpha}}.$$
    When $m = \deg(f)$, we write $\norm{f}_B \coloneqq \norm{f}_{B,\deg(f)}$ to simplify the notation.
    \item For $f \in \RRg$, we denote $\norm{f}_\infty$ the infinity or supremum norm of $f$ on $D$:
    $$ \norm{f}_\infty = \max_{x \in D} \abs{f(x)}.$$
    \item For a vector $v = (v_1, \dots , v_N) \in \RR^N$, we denote $\norm{v}_2$ its Euclidean norm:
    $$ \norm{v}_2 = \sqrt{\sum_{i=1}^N v_1^2}.$$
    \item Moreover, if $M \in \RR^{N_1 \times N_2}$, we denote $\norm{M}_2$ the induced operator norm:
    $$ \norm{M}_2 = \sup_{v \neq 0} \frac{\norm{Mv}_2}{\norm{v}_2} = \sigma_{\max}(M),$$
    where $\sigma_{\max}(M)$ denotes the largest singular value of $M$.
\end{itemize} 

We recall some properties of the norms mentioned above, and in particular for the Bernstein norm that will be central in the article. For $f\in \RRg_m$ and $m' \ge m$, we have
$$
\max_{x\in D} |f(x)| = \norm{f}_\infty \le \|f\|_{B,m'} \le \|f\|_{B,m}
$$
These well-known inequalities are consequences of the property that the graph of $f$ is in the convex hull of its control points and that degree elevation representation is performed by barycentric combinations of the coefficients of $f$ (see e.g. \cite{FarinCurvessurfacesCAGD2001}).
We will also use the following multiplicative property of the Bernstein norm, which we briefly prove for the sake of completeness:
\begin{lemma}
For $f\in \RRg_m, g \in \RRg_{m'}$, we have
$$
\| f\, g\|_{B,m+m'} \le \|f\|_{B,m}\, \| g\|_{B,m'}
$$
\end{lemma}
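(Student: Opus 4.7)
The plan is to reduce everything to the product rule for the Bernstein basis and the multinomial Vandermonde convolution. First I would rewrite $B^D_{m,\alpha}$ in barycentric coordinates on $D$: set $u_0 = (\sqrt n - X_1 - \cdots - X_n)/(n+\sqrt n)$ and $u_i = (1+X_i)/(n+\sqrt n)$ for $i=1,\dots,n$, so that $u_0 + u_1 + \cdots + u_n \equiv 1$ and
\[
B^D_{m,\alpha}(\vb X) = \binom{m}{\alpha}\, u_0^{m-|\alpha|}\, u_1^{\alpha_1}\cdots u_n^{\alpha_n}.
\]
Multiplying two such basis polynomials, all powers of the $u_i$ just add, and a multinomial coefficient appears: a one-line computation gives
\[
B^D_{m,\alpha}\cdot B^D_{m',\beta} \;=\; \frac{\binom{m}{\alpha}\binom{m'}{\beta}}{\binom{m+m'}{\alpha+\beta}}\, B^D_{m+m',\,\alpha+\beta}.
\]

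Next I would expand $f = \sum_\alpha f_{m,\alpha}B^D_{m,\alpha}$ and $g = \sum_\beta g_{m',\beta}B^D_{m',\beta}$, substitute the product rule above, and collect terms by $\gamma := \alpha + \beta$. This gives
\[
fg \;=\; \sum_\gamma (fg)_{m+m',\gamma}\, B^D_{m+m',\gamma}, \qquad (fg)_{m+m',\gamma} \;=\; \sum_{\alpha+\beta=\gamma}\, \frac{\binom{m}{\alpha}\binom{m'}{\beta}}{\binom{m+m'}{\gamma}}\, f_{m,\alpha}\, g_{m',\beta}.
\]
The key combinatorial point is then the multinomial Vandermonde identity
\[
\sum_{\alpha+\beta=\gamma}\binom{m}{\alpha}\binom{m'}{\beta} \;=\; \binom{m+m'}{\gamma},
\]
which I would justify in one line by extracting the $u_0^{m+m'-|\gamma|}u_1^{\gamma_1}\cdots u_n^{\gamma_n}$ coefficient of the identity $(u_0+\cdots+u_n)^m(u_0+\cdots+u_n)^{m'}=(u_0+\cdots+u_n)^{m+m'}$.

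With this identity in hand, each coefficient $(fg)_{m+m',\gamma}$ is a convex combination of the products $f_{m,\alpha}g_{m',\beta}$, so
\[
|(fg)_{m+m',\gamma}| \;\le\; \max_{\alpha+\beta=\gamma} |f_{m,\alpha}|\,|g_{m',\beta}| \;\le\; \|f\|_{B,m}\,\|g\|_{B,m'},
\]
and taking the maximum over $\gamma$ yields the claim. There is no real obstacle here: the only piece that requires a tiny bit of care is the Vandermonde step, but it is immediate from the generating-function identity for the partition-of-unity barycentric coordinates. The rest is bookkeeping, analogous to the well-known univariate statement $\|fg\|_B \le \|f\|_B\|g\|_B$ for the Bernstein basis on an interval.
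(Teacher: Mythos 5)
Your proof is correct and follows essentially the same route as the paper's: expand both factors in the Bernstein basis, use the product rule $B^D_{m,\alpha}B^D_{m',\beta}=\frac{\binom{m}{\alpha}\binom{m'}{\beta}}{\binom{m+m'}{\alpha+\beta}}B^D_{m+m',\alpha+\beta}$, and observe via the multinomial Vandermonde identity that each coefficient of $fg$ is a convex combination of products $f_{m,\alpha}g_{m',\beta}$. The only difference is cosmetic: you make the Vandermonde step explicit with the generating-function identity, whereas the paper leaves the bound $\sum_{\alpha+\beta=\gamma}\binom{m}{\alpha}\binom{m'}{\beta}/\binom{m+m'}{\gamma}\le 1$ implicit.
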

\begin{proof}
For $f=\sum_{|\alpha|\le m} f_\alpha B_{m,\alpha}^D$, $g=\sum_{|\beta|\le m'} f_\beta B_{m',\beta}^D$,  we have
\begin{align*}
    \| f \, g\|_{B,m+m'} & = \| \sum_{|\gamma|\le m+m'} (\sum_{\alpha+\beta=\gamma} f_\alpha g_\beta \frac{{m \choose \alpha} {m'\choose \beta} }{{m+m' \choose \gamma}}) B^D_{\gamma}(x)\|_{B,m+m'}\\
    & = \max_{|\gamma|\le m+m'} |\sum_{\alpha+\beta=\gamma} f_\alpha g_\beta \frac{{m \choose \alpha} {m'\choose \beta} }{{m+m' \choose \gamma}}|\\
    & \le \max_{|\alpha|\le m} |f_\alpha| \, \max_{|\beta|\le m'} |g_\beta|  \max_{|\gamma|\le m+m'}\sum_{\alpha+\beta=\gamma} \frac{{m \choose \alpha} {m'\choose \beta} }{{m+m' \choose \gamma}} \le \| f\|_{B,m}\, \| g\|_{B,m'}
\end{align*}
\end{proof}

\subsection{Parameters} \label{parameters}

We summarize here the notation and symbols that will appear in the bound of the Effective Putinar's Positivstellensatz.
    \begin{enumerate}
    \item $\vb g = g_1, \dots , g_r$ denotes the $r$-tuple of real polynomials in $n$ variables defining the basic closed semialgebraic set $S = \cS(\vb g)$;
    \item $d(\vb g) \coloneqq \max_{i \in \{1, \dots, r\}} \deg(g_i)$;
    \item $f$ denotes a strictly positive polynomial on $S$ of degree $d = \deg(f)$ and $f^*=\inf\{ \, f(x)\mid x\in S \, \} > 0$ denotes its minimum on $S$;
    \item $\eps = \eps(f) \coloneqq  {\frac {f^*}  {\|f\|_{B}}}$ is a measure of how close $f$ is to vanish on $S$.
\end{enumerate}

In the article, by $O(\cdot)$, we mean a quantity such that  $O(\cdot) \le c_1 (\cdot)$ for some constant $c_1>0$ independent of $n$ and of the polynomials $\vb g, f$ involved in the problem.

\section{\L ojasiewicz inequalities for sum of squares representations}
\label{sec::loja}
{In this section we introduce several {\L}ojasiewicz inequalities between functions defined on $D$ and vanishing on $S= \cS(\vb g)$. In the following section, in order to analyze the representation of a positive polynomial $f$ on $S$, we use {\L}ojasiewicz inequalities to construct a polynomial $p$, a deformation of $f$, which is positive on $D $ with a minimum of the same order than $f^*=\inf_{x\in S} f(x)$. 
For this purpose, we need to compare on $D $ the behavior of the function $f$ with the behavior of the functions $g_1, \ldots, g_r$, and we introduce the following semi-algebraic functions.}
For $x\in D $, let 
\begin{align}\label{eq:defF}
F(x)&= - \min\left(\frac{f(x)-f^*}{\| f\|_B},0\right)\\ \label{eq:defG}
G(x)&= -\min\left(\frac{g_1(x)}{\|g_1\|_B}, \ldots, \frac{g_r(x)}{\|g_r\|_B},0\right).
\end{align}
The function $G$ can be seen as a \emph{semi-algebraic distance to} $S$, since $x\in S$ if and only if $G(x)=0$. 
{As $F(x)\ge 0$, $G(x)\ge 0$, $G^{-1}(0)=S$, and $F^{-1}(0)\supset S$ we deduce from  \Cref{thm::lojasiewicz} and the remark after it the existence of the following constants.}

\begin{definition}[{{\L ojasiewicz exponent and constant}}] \label{def:loja}
The smallest $L$ such that
\begin{equation}\label{eq:loja_F_G}
    \forall x \in D, \quad F(x)^L \leq \cst G(x)
\end{equation}
is called \emph{the {\L}ojasiewicz exponent}. {For $L$ satisfying \Cref{eq:loja_F_G} fixed,
we call the smallest constant $ \cst>0$ satisfying \Cref{eq:loja_F_G} \emph{the {\L}ojasiewicz constant} (relative to $L$).} 
\end{definition}

To analyse these exponent and constant, we first relate $F$ to the \emph{Euclidean distance} function \[ E \colon D \ni x \mapsto E(x) = \dist{x}{S}. \]
{This is another continuous semialgebraic function vanishing on $S$ and, therefore, $F$ and 
$E$ can be related by {\L}ojasiewicz inequality.  As we show below, we have  $\forall x\in D$, }
\begin{align}
\label{eq::F_E}
    F(x) & \le \frac{4d^2-2 d}{w(D)} E(x) \le 2 d^2 E(x),
\end{align}
with $w(D)=\sqrt{n}+1$ and $d=\deg(f)$.  {Let us first recall the following Markow inequality.}

\begin{theorem}[{\cite[th.~3]{kroo_bernstein_1999}}]
\label{thm::markov_inequality}
  Let $p \in \RRg_d$ be a polynomial of degree $\le d$. Then:
  \[
    \norm{\norm{\grad p(x)}_2}_\infty=\max_{x \in D}{\norm{\grad p(x)}_2} \le \frac {2 d (2d-1)} {w(D)} \norm{p}_{\infty}
  \]
  where $w(D)$, the width of $D$, is the minimal distance between a pair of distinct parallel supporting hyperplanes.
\end{theorem}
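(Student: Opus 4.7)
The plan is to reduce the multivariate gradient bound to the classical univariate Markov inequality by restricting $p$ to a well-chosen chord of $D$. By compactness of $D$ and continuity, pick $x_0 \in D$ at which $\norm{\grad p(\cdot)}_2$ attains its maximum, and set the unit vector $v := \grad p(x_0)/\norm{\grad p(x_0)}_2$, so that the directional derivative $\partial_v p(x_0)$ equals exactly $\norm{\grad p(x_0)}_2$, the quantity we wish to bound. This reduces the problem to estimating $|q'(0)|$ for the univariate polynomial $q(t) := p(x_0 + t v)$ of degree at most $d$.

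Next, I would apply the classical univariate Markov inequality to $q$ on the maximal interval $I = [a,b]$ of parameters $t$ such that $x_0 + t v \in D$. In its sharp endpoint form (extremised by the Chebyshev polynomial $T_d$), one-dimensional Markov gives
\[
|q'(t_0)| \le \frac{2 d(2d-1)}{b-a}\, \max_{[a,b]} |q|
\]
whenever $t_0$ is an endpoint of $I$, and a slightly smaller bound of order $2d^2/(b-a)$ at interior points. Combined with the trivial estimate $\max_{[a,b]}|q| \le \norm{p}_\infty$, this yields the desired inequality as soon as one knows $b-a \ge w(D)$.

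The crux — and the main obstacle — is therefore to establish that the chord of $D$ through $x_0$ in the gradient direction $v$ has length at least $w(D)$. This is not automatic: $x_0$ may sit close to $\partial D$ along $v$, making the chord arbitrarily short. I would handle this by a symmetrisation argument: consider $K := D \cap (2x_0 - D)$, a centrally symmetric convex body about $x_0$ contained in $D$, and show that its extent in any direction $v$ is bounded below in terms of $w(D)$. Alternatively one can invoke the refined Bernstein form of the 1D Markov inequality, $|q'(t_0)| \le \frac{d}{\sqrt{(t_0-a)(b-t_0)}}\,\max|q|$, which absorbs the loss coming from a short chord by the fact that $x_0$ must then lie close to one of the two supporting hyperplanes realizing $w(D)$. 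Either way, the geometric step delivers $b-a \gtrsim w(D)$.

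Once the chord estimate is in hand, everything else is routine:
\[
\max_{x\in D} \norm{\grad p(x)}_2 \;=\; |q'(0)| \;\le\; \frac{2 d(2d-1)}{w(D)}\, \norm{p}_\infty,
\]
which is the stated bound. I expect essentially all the real work to be in the geometric chord-through-$x_0$ estimate; the constant $2d(2d-1)$ rather than the usual $2d^2$ arises naturally from the endpoint form of univariate Markov, corresponding to the worst case in which $x_0$ lies on the boundary $\partial D$ along $v$.
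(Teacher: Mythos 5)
This theorem is not proved in the paper: it is imported verbatim from Kro\'o--R\'ev\'esz (building on Wilhelmsen's earlier $4d^2/w$ bound), so the only comparison available is between your sketch and the known proof. Your overall strategy --- reduce to a univariate Markov inequality along a line through the maximising point $x_0$ --- is indeed the right one, but the step you yourself identify as the crux is exactly where the argument breaks, and neither of your proposed repairs works. The chord of $D$ through $x_0$ in the gradient direction $v$ can be arbitrarily short compared with $w(D)$. Your symmetrisation $K= D\cap (2x_0-D)$ does not have extent bounded below by $w(D)$ in the direction $v$: for $D=[0,1]^2$, $x_0=(\eps,1/2)$ and $v=e_1$ one gets $K=[0,2\eps]\times[0,1]$, whose extent in the direction $e_1$ is $2\eps$, while $w(D)=1$; the same happens for the paper's simplex near a vertex. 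Your second repair fails for the same reason: the pointwise Bernstein bound $d/\sqrt{(t_0-a)(b-t_0)}$ degenerates precisely when the chord is short (both factors under the root are then small, and if $x_0\in\partial D$ the point $t_0$ may be an endpoint, making the bound vacuous), and proximity of $x_0$ to a supporting hyperplane realising $w(D)$ gives no lower bound on $(t_0-a)(b-t_0)$.

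The actual argument avoids the chord in direction $v$ altogether. Since the extent of $D$ in the direction $v$ is at least $w(D)$ and $x_0\in D$, there is a point $y\in D$ with $\abs{\langle y-x_0,v\rangle}\ge w(D)/2$. Restricting $p$ to the segment $[x_0,y]\subset D$, i.e.\ setting $q(t)=p(x_0+t(y-x_0))$ on $[0,1]$, one has $q'(0)=\norm{\grad p(x_0)}_2\,\langle v,y-x_0\rangle$, and the univariate Markov inequality on $[0,1]$ gives $\abs{q'(0)}\le 2d^2\norm{p}_\infty$, whence $\norm{\grad p(x_0)}_2\le 4d^2\norm{p}_\infty/w(D)$. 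This is Wilhelmsen's bound; the improvement from $4d^2$ to $2d(2d-1)=4d^2-2d$ is the whole content of the Kro\'o--R\'ev\'esz refinement and requires a sharper pointwise univariate estimate than plain endpoint Markov. In particular, your claim that $2d(2d-1)/(b-a)$ is the sharp endpoint constant of the one-dimensional Markov inequality is incorrect (the sharp constant is $2d^2/(b-a)$, coming from $T_d'(1)=d^2$), which is why your accounting does not actually explain where the factor $2d(2d-1)$ comes from.
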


Now for $y\in D$ and $z\in S$ such that $E(y)=\dist{y}{S}=\|y-z\|_2$, we have 
\[
F(y) = F(y)-F(z) \le \cL_F \| y-z\|_2 = \cL_F E(y), 
\]
where $\cL_F$ is the Lipschitz constant of $F$ on $D$.  {Since $\cL_F= \| f\|_B \max_{x \in D}{\norm{\grad f(x)}_2} $, the inequality \eqref{eq::F_E} follows from the above Markov inequality theorem applied to $p=f$.}

As $E(x)=0$ implies $G(x)=0$,  these two functions are related as well by a {\L}ojasiewicz inequality:
\begin{equation}\label{eq:loja_E_G}
    \forall x \in D, \quad E(x)^{L_{E,G}} \leq \cst_{E,G} G(x)
\end{equation}
Therefore we can bound the {\L}ojasiewicz exponent and constant for $F$ and $G$, by analysing the {\L}ojasiewicz inequality between the \emph{Euclidean distance} function $E$
and the \emph{semi-algebraic distance} function $G$ in equation \eqref{eq:loja_E_G} and equation \eqref{eq::F_E}. More precisely, we have the following inequality: $L \le L_{E,G}$.

{In the next subsection, we analyze the {\L}ojasiewicz inequality \eqref{eq:loja_E_G} under a regularity assumption and show, that under this assumption,  $L_{E,G} = 1$.  We also compute 
 the constant $\cst_{E,G}$.}
Since $G$ and $S$ are invariant by scaling the functions $g_i$ by positive scalars, we will assume hereafter the following.
\subsection*{Scaling assumption:}
\begin{equation}
 \quad
  \label{assum::scaling}
  \norm{g_i}_B = 1 \text{ for all } i \in \{ \, 1,\dots , r \, \}
\end{equation}

\subsection{Minimizers of the distance function}
{In Definition \ref{def::CQC} below we introduce a regularity condition on $\vb g$ that implies  that $L_{E,G} = 1$, see Theorem \ref{thm:main_theorem}.} This is a standard condition in optimization (see \cite[sec.~3.3.1]{bertsekas_nonlinear_1999}), which implies the so-called Karush–Kuhn–Tucker (KKT) conditions \cite[prop.~3.3.1]{bertsekas_nonlinear_1999}.

\begin{definition}
\label{def::CQC}
Let $x \in \cS(\vb g)$. We define the \emph{active constraints} at $x$ are the constraints $g_{i_{1}}, \ldots, g_{i_{m}}$ such that $g_{i_{j}}(x)=0$.
We say that the \emph{Constraint Qualification Condition (CQC)} holds at $x$ if for all active constraints $g_{i_{1}}, \ldots, g_{i_{l}}$ at $x$, the gradients $\nabla g_{i_{1}}(x), \ldots,  \nabla g_{i_{m}}(x)$ are linearly independent.
\end{definition}

We start working locally. For $z \in S$ we denote \[ I = I(z) = \{ \, i \in \{1,\dots , r\} \mid g_i(z) =0\, \} \]
the indices corresponding to the \emph{active constraints} at $z$.
For $y \in D$ and $z \in S$ such that $E(y) = \norm{y-z}_2$
we denote:
\begin{itemize}
  \item $\vb g = \vb g (y) = (g_1(y), \dots, g_r(y))$;
  \item $\vb g_I = \vb g_I(y) = (g_i(y) \colon i \in I)$;
  \item $J = J(z) = \jac (\vb g_I)(z) = \bigl(\pdv{g_i}{x_j}\,(z)\bigr)_{i\in I, \ j \in \{1,\dots,r\}}$ the transposed Jacobian matrix of $\vb g$ at $z$,
  that is the matrix whose columns are the entries of the gradients $\grad g_i(z)$;
  \item $\vb N_I = \vb N_I(z) = \gram (\grad g_i(z) \colon i \in I) = J^t J$ the Gram matrix at $z$.
\end{itemize}

\begin{definition}\label{def::sing_val}
We denote by $\sigma_J (z) = \sigma_{\min}(J(z)) $ the smallest singular value $\sigma_{\min}(J(z))$ of $J(z)$.
\end{definition}
As $\vb N_I=J^t J$, notice that $\norm{\vb N_I^{-1}}_2= \sigma_{\min}(\vb N_I)^{-1}=\sigma_{\min}(J)^{-2}= \sigma_J(z)^{-2}$.

We show now how we can use $J = J(z)$
to describe the cone of points $y$ such that $E(y)=\dist{y}{S} = \norm{y-z}_2$.
\begin{lemma}
  \label{lem::difference_as_gradient}
    Let $y \in \RR^n \setminus \cS(\vb g)$, and let $z$ be a point in $S = \cS(\vb g)$
    minimizing the distance of $y$ to $S$, that is $E(y)=\dist{y}{S} = \norm{y-z}_2$. If $\{\,g_i \colon i \in I \,\}$ are the active constraints at $z$ and the CQC hold, then there exist $\lambda_i \in \RR_{\ge 0}$ such that:
    \[
      y - z = \sum_{i\in I} \lambda_i \grad (-g_i)(z) = - J \bm{\lambda}.
    \]
  \end{lemma}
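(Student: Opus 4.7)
The plan is to recognise the statement as a textbook Karush--Kuhn--Tucker (KKT) conclusion for the constrained minimisation problem
\[ \min_{x \in \RR^n} \tfrac{1}{2}\|y - x\|_2^2 \quad \text{subject to} \quad -g_i(x) \le 0, \ i = 1,\dots,r, \]
of which $z$ is, by hypothesis, a global (hence local) minimiser. The CQC at $z$ is precisely the linear independence constraint qualification for the active inequalities $\{-g_i : i \in I\}$, which by the reference Bertsekas, Prop.~3.3.1 already cited by the authors, guarantees that the KKT conditions hold at $z$ with multipliers $\mu_i \ge 0$.

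Writing the Lagrangian $L(x,\mu) = \tfrac{1}{2}\|y-x\|_2^2 + \sum_{i=1}^r \mu_i (-g_i(x))$, the stationarity condition $\nabla_x L(z,\mu) = 0$ reads
\[ -(y-z) + \sum_{i=1}^r \mu_i \nabla(-g_i)(z) = 0, \]
while complementary slackness $\mu_i \cdot (-g_i(z)) = 0$ forces $\mu_i = 0$ whenever $g_i(z)>0$, i.e.\ for every $i \notin I$. Therefore only the active indices survive, and rearranging gives
\[ y - z = \sum_{i \in I} \mu_i \nabla(-g_i)(z) = -J(z)\,\bm{\mu}_I, \]
which is the claimed identity with $\lambda_i := \mu_i \ge 0$.

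If one prefers a self-contained argument avoiding the KKT citation, CQC together with the implicit function theorem lets one associate to every $v \in \RR^n$ with $\nabla g_i(z) \cdot v \ge 0$ ($i\in I$) a $C^1$ curve $\gamma(t)\in S$ with $\gamma(0)=z$, $\gamma'(0)=v$; minimality of $\|y-\gamma(t)\|_2^2$ at $t=0$ gives $(y-z)\cdot v \le 0$, and the desired nonnegative combination follows from Farkas's lemma. The only subtle point in either route is justifying that CQC is strong enough to (a) suppress inactive multipliers and (b) realise every linearised feasible direction as an actual feasible curve; both are immediate consequences of the linear independence of $\{\nabla g_i(z) : i \in I\}$, which is the main (and essentially only) obstacle to overcome.
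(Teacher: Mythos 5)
Your proof is correct and follows essentially the same route as the paper: recognise $z$ as a minimiser of $\min_x \tfrac{1}{2}\norm{y-x}_2^2$ over $\cS(\vb g)$, invoke the KKT conditions under the linear independence constraint qualification (the same Bertsekas reference the paper uses), and read off the nonnegative multipliers for the active constraints. The additional Farkas/feasible-curve sketch is a nice self-contained alternative but is not needed.
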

  \begin{proof}
    Fix $y \in \RR^n$. Notice that $y - x = - \frac{\grad \norm{y-x}_2^2}{2}$, where the gradient is taken w.r.t. $x$. Moreover $z \in S$ such that $\dist{y}{S} = \norm{y-z}_2$ is a minimizer of the following Polynomial Optimization Problem:
    \[
      \min_x \frac{\norm{y-x}_2^2}{2} \colon g_i(x) \ge 0 \ \forall i\in \{1,\dots,r \}.
    \]
    Since the CQC holds at $z$, we deduce from \cite[prop.~3.3.1]{bertsekas_nonlinear_1999} that the KKT conditions hold. In particular:
    \[
      \frac{\grad \norm{y-z}_2^2}{2} = \sum_{i\in I} \lambda_i \grad g_i(z)
    \]
    For some $\lambda_i \in \RR_{\ge 0}$. Therefore $y-z = - \frac{\grad \dist{y}{z}^2}{2} = \sum_{i\in I} \lambda_i \grad (-g_i)(z)$.
  \end{proof}
Let  $\lam= \lam (y) := (\lambda_i (y); i\in I)$ be the column vector in \Cref{lem::difference_as_gradient}, so that 
$(y-z) = - J \lam $. Note that $\lam (y)$  depends linearly on  $y-z$ and is given by the formula 
$$
\lam (y) = - \vb N_I^{-1} J^t (y-z).
$$
Then, using Taylor's expansion at $z$ and \Cref{lem::difference_as_gradient}, we obtain:
\begin{equation}
\label{eq::g_Jacobian}
    \g_{I} = \g_{I} (y) = J^t (y-z)  + \h = - \vb N_I \lam + \h
\end{equation}
and the mean-value form for the remainder in Taylor's theorem gives:
\begin{align}\label{eq:hbound}
\norm{\vb h}_2\le \cst_2 \norm{y-z}_2^2, 
\end{align}
where $\cst_2 = \cst_2(\vb g) = \max_{x\in D}\{ \norm{\mathrm{Hess}( g_i)}_2, i=1, \dots,r\}$ denotes an upper bound for 
the second derivative of $\g_I$ on $D$.

We keep working locally at $z \in S$, and in particular considering only the active constraints at $z $,
whose indexes are denoted $I(z) \subset \{ \, 1, \dots , r \, \}$. Notice that, if $y \in D \setminus S$
is close enough to $z \in \partial S$, then $g_i(y) \le 0$ implies $g_i(z) = 0$: so only the active constraints
at $z$ and negative at $y$ determine the value of $G(y)$ in a neighborhood of $z$.
We introduce a notation to identify those indices:
  \begin{equation}\label{eq::I}
    I_- = I_-(y, z) = \{\, j \in I=I(z) \mid g_j (y) \le 0  \, \}.
  \end{equation}

Moreover we introduce the function $\Gm (y) = \bigl( \sum_{j \in I_-} g_j(y)^2 \bigr)^{\frac{1}{2}} $
as an intermediate step between $G$ and $E$. Indeed, it is easy to upper bound $\Gm (y)$ in terms of $G(y)$:
\begin{equation}\label{eq:G_between_norms}
    \Gm (y) = \bigl( \sum_{j \in I_-} g_j(y)^2 \bigr)^{\frac{1}{2}} \le \sqrt{\abs{I_-}} \max_{j\in I_-} \abs{g_j(y)} \le \sqrt{n} G(y).
    \end{equation}
    For the last inequality, we are using the fact that CQC at $z$ implies $\abs{I_-} \le \abs{I} \le n$. 
So we only need to find an upper bound for $E(y)$ in terms of $\Gm (y)$. In order to do that, let $\vb g_I (y)= \vb g_- (y)+ \vb g_+ (y)$, where:
\begin{itemize}
    \item $\vb g_- (y) = (\min\{0, \, g_i(y) \colon i \in I\})$ and
    \item $\vb g_+ (y) = (\max\{0, \, g_i(y) \colon i \in I\})$,
\end{itemize}
and notice that $\norm{\vb g_- (y)}_2 = \Gm (y)$.

We proceed similarly to analyze the linear part of $\vb g_I$. In the sequel we denote 
\begin{equation}
\label{eq::gamma}
    \gam = \gam (y)  = J^t (y-z) = - \vb N_I \lam (y) - \vb N_I \lam
\end{equation}
the linear part of $\g _I$ at $z$.

{To show Theorem \ref{thm:main_theorem} we first show the inequality 
\eqref{eq::basic_inequality} for the linear part $\gam (y)$, and then, in the following subsection, extend it to $\vb g_I$.}
In particular we want to relate the norm $\norm{y-z}_2 = \inner{y-z}{y-z}$,
computed with respect to the Euclidean scalar product,
with the norm of $\gam (y)$  w. r. t. another inner product.
Exploiting \eqref{eq::gamma}, one sees that 
\begin{equation}
    \label{eq::basic_inequality_prep}
    \inner{y-z}{y-z} = \inner{\bm \lambda}{\bm \lambda}_{\vb N_I} = \inner{\gam}{\gam}_{\vb N_I^{-1}}
\end{equation}
where $\inner{\cdummy}{\cdummy}_{\vb N_I}$ denotes the inner product induced by $\vb N_I$:
$\inner{\bm \lambda}{\bm \lambda}_{\vb N_I} = \bm \lambda^t N_I \bm \lambda$.
Notice that both $\vb N_I$ and $\vb N_I^{-1}$
define an inner product since they are positive definite.

As in the case of $\vb g_I$, let 
\begin{equation}\label{eq::I-}
  \tilde I_- = \tilde I_- (y, z) = \{ \, i\in I(z) \mid \gamma_i(y) \le 0 \, \}
\end{equation}
 and $\gam (y)= \gam_- (y)+ \gam_+ (y)$, where:
\begin{itemize}
    \item $\gam_- (y) = (\min\{0, \, \gam_i(y)\} \colon i \in I)$ and
    \item $\gam_+ (y) = (\max\{0, \, \gam_i(y) \}\colon i \in I)$.
\end{itemize}

\begin{lemma}\label{lem:gamma}
With the notation above, we have:
\begin{itemize}
    \item $\inner{\gam_-}{\gam}_{\vb N_I^{-1}} \ge 0$;
    \item $\inner{\gam_+}{\gam}_{\vb N_I^{-1}} \le 0$
    \item $\inner{\gam_+}{\gam_-}_{\vb N_I^{-1}} \le 0 $
\end{itemize}
\end{lemma}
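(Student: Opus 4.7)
The plan is to exploit the KKT identity $\lam = -\vb N_I^{-1}\gam$ from \Cref{lem::difference_as_gradient} and \eqref{eq::gamma}, together with the orthogonal‑support decomposition $\gam = \gam_- + \gam_+$. The crucial observation is that the vector $\vb N_I^{-1}\gam$ has a definite sign componentwise, which is not the case for $\gam$ itself. Indeed, since the Lagrange multipliers satisfy $\lam_i \ge 0$ for every $i \in I$, we have
\[
  \vb N_I^{-1}\gam \;=\; -\lam \;\le\; 0 \quad \text{componentwise.}
\]
This is the single fact that drives all three inequalities, and obtaining it is essentially the only non‑trivial step; once it is in hand the rest is a sign count.

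For the first inequality, I would write
\[
  \inner{\gam_-}{\gam}_{\vb N_I^{-1}} \;=\; \gam_-^{\,t}\,\vb N_I^{-1}\gam \;=\; \sum_{i\in I}(\gam_-)_i\,(\vb N_I^{-1}\gam)_i.
\]
By definition $(\gam_-)_i \le 0$, and by the observation above $(\vb N_I^{-1}\gam)_i \le 0$, so every term of the sum is non‑negative, giving $\inner{\gam_-}{\gam}_{\vb N_I^{-1}} \ge 0$. For the second inequality, the same expansion with $\gam_+$ in place of $\gam_-$ yields terms with $(\gam_+)_i \ge 0$ and $(\vb N_I^{-1}\gam)_i \le 0$, so each term is non‑positive and $\inner{\gam_+}{\gam}_{\vb N_I^{-1}} \le 0$.

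For the third inequality, I would avoid expanding $\gam_+^{\,t}\vb N_I^{-1}\gam_-$ directly (the matrix $\vb N_I^{-1}$ need not preserve any sign structure on arbitrary pairs) and instead use the decomposition $\gam_- = \gam - \gam_+$:
\[
  \inner{\gam_+}{\gam_-}_{\vb N_I^{-1}} \;=\; \inner{\gam_+}{\gam}_{\vb N_I^{-1}} \;-\; \inner{\gam_+}{\gam_+}_{\vb N_I^{-1}}.
\]
The first term on the right is $\le 0$ by the second inequality just proved, and the second term is $\ge 0$ because $\vb N_I^{-1}$ is positive definite (as $\vb N_I$ is positive definite by CQC); subtracting a non‑negative quantity from a non‑positive one gives $\inner{\gam_+}{\gam_-}_{\vb N_I^{-1}} \le 0$, concluding the proof. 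The potential pitfall I would flag is the temptation to try a direct componentwise argument on $\gam_+^{\,t}\vb N_I^{-1}\gam_-$; the cleaner route is the substitution $\gam_- = \gam - \gam_+$ which reduces the third claim to the second.
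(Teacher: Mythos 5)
Your proof is correct and follows essentially the same route as the paper's: both rest on the identity $\vb N_I^{-1}\gam = -\lam$ with $\lam \ge 0$ componentwise (so that the first two bullets become sign counts over the supports of $\gam_-$ and $\gam_+$), and both reduce the third bullet to the second via the substitution $\gam_- = \gam - \gam_+$ together with the positive definiteness of $\vb N_I^{-1}$.
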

\begin{proof}
For the first inequality notice that $\inner{\gam _-}{\gam}_{\vb N_I^{-1}} = - \gam_-^t \lam = - \sum_ {i\in \tilde I_-} \gamma_i \lambda_i \ge 0$
because all $\lambda_i$ are non-negative. A similar argument shows the second inequality. Finally $\inner{\gam_+}{\gam_-}_{\vb N_I^{-1}} = \inner{\gam_+}{\gam}_{\vb N_I^{-1}} - \inner{\gam_+}{\gam_+}_{\vb N_I^{-1}} \le 0$
as claimed. 
\end{proof}

  The following observation, crucial for the sequel, shows that we can bound
  $\norm{y-z}_2$ only in terms of 
  the negative $\gamma_i $. 

 \begin{prop}\label{lem:bound_Igam}
With the notation above, we have:
    \begin{equation}\label{eq::basic_inequality}
        \norm {y-z}_2\le \frac {1}{\sigma_J(z)} \big (\sum _{i\in \tilde I_-} \gamma^2_i (y) \big )^{1\over 2} = \frac {1}{\sigma_J(z)} \norm{\gam_{-}}_2
    \end{equation}
    where $\sigma_J (z)$ is the smallest singular value of $J$ (see \Cref{def::sing_val}).
  \end{prop}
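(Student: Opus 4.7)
The plan is to work throughout with the inner product $\inner{\cdot}{\cdot}_{\vb N_I^{-1}}$ induced by the symmetric positive definite matrix $\vb N_I^{-1}$, since by \eqref{eq::basic_inequality_prep} we already have the identity $\norm{y-z}_2^2 = \inner{\gam}{\gam}_{\vb N_I^{-1}}$. The strategy is to first show that $\gam_-$ alone controls $\gam$ in this weighted norm, and only then convert back to the Euclidean norm on the right-hand side by bounding the operator norm of $\vb N_I^{-1}$.

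First I would decompose $\gam = \gam_+ + \gam_-$ inside the inner product and write
\begin{equation*}
    \inner{\gam}{\gam}_{\vb N_I^{-1}} = \inner{\gam_+}{\gam}_{\vb N_I^{-1}} + \inner{\gam_-}{\gam}_{\vb N_I^{-1}}.
\end{equation*}
By \Cref{lem:gamma} the first summand is non-positive, so $\inner{\gam}{\gam}_{\vb N_I^{-1}} \le \inner{\gam_-}{\gam}_{\vb N_I^{-1}}$. Applying the Cauchy--Schwarz inequality in the inner product $\inner{\cdot}{\cdot}_{\vb N_I^{-1}}$ yields $\norm{\gam}_{\vb N_I^{-1}}^2 \le \norm{\gam_-}_{\vb N_I^{-1}} \norm{\gam}_{\vb N_I^{-1}}$, which (after cancelling one factor, treating separately the trivial case $\gam = 0$) gives $\norm{\gam}_{\vb N_I^{-1}} \le \norm{\gam_-}_{\vb N_I^{-1}}$.

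Next I would pass from the weighted norm of $\gam_-$ to its Euclidean norm by the standard bound $\gam_-^t \vb N_I^{-1} \gam_- \le \norm{\vb N_I^{-1}}_2 \norm{\gam_-}_2^2$. Since $\vb N_I = J^t J$, we have $\norm{\vb N_I^{-1}}_2 = \sigma_{\min}(\vb N_I)^{-1} = \sigma_{\min}(J)^{-2} = \sigma_J(z)^{-2}$, as already noted after \Cref{def::sing_val}. Combined with \eqref{eq::basic_inequality_prep}, this chain gives
\begin{equation*}
    \norm{y-z}_2 = \norm{\gam}_{\vb N_I^{-1}} \le \norm{\gam_-}_{\vb N_I^{-1}} \le \frac{1}{\sigma_J(z)} \norm{\gam_-}_2 = \frac{1}{\sigma_J(z)} \Bigl( \sum_{i \in \tilde I_-} \gamma_i^2(y) \Bigr)^{1/2},
\end{equation*}
which is the claimed inequality.

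I do not expect a genuine obstacle: the non-trivial content has been packaged into \Cref{lem:gamma} (which encodes the KKT sign information through $\lam \ge 0$) and into the identity \eqref{eq::basic_inequality_prep}. The only subtle point is remembering to use Cauchy--Schwarz with respect to the weighted inner product $\inner{\cdot}{\cdot}_{\vb N_I^{-1}}$ rather than the Euclidean one, so that the sign information on $\inner{\gam_+}{\gam}_{\vb N_I^{-1}}$ can be exploited, and to switch norms only at the very last step so that the constant $\sigma_J(z)^{-1}$ appears exactly once.
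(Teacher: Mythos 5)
Your proof is correct and follows essentially the same route as the paper's: both rest on \Cref{lem:gamma}, the identity \eqref{eq::basic_inequality_prep}, and the bound $\norm{\vb N_I^{-1}}_2 = \sigma_J(z)^{-2}$ to convert the weighted norm of $\gam_-$ into its Euclidean norm. The only cosmetic difference is that you obtain $\norm{\gam}_{\vb N_I^{-1}} \le \norm{\gam_-}_{\vb N_I^{-1}}$ via Cauchy--Schwarz from the single inequality $\inner{\gam_+}{\gam}_{\vb N_I^{-1}} \le 0$, whereas the paper expands $\inner{\gam}{\gam}_{\vb N_I^{-1}}$ into three terms and discards the two non-positive cross terms directly.
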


\begin{proof}
Note that \Cref{lem:gamma} implies the proposition since it shows that
\[
    \inner{\gam}{\gam}_{\vb N_I^{-1}} = \inner{\gam_+}{\gam}_{\vb N_I^{-1}} + \inner{\gam_-}{\gam_+}_{\vb N_I^{-1}} + \inner{\gam_-}{\gam_-}_{\vb N_I^{-1}} \le \inner{\gam_-}{\gam_-}_{\vb N_I^{-1}}
\]
and this allows us to complete \eqref{eq::basic_inequality_prep} to get \eqref{eq::basic_inequality}:
\[
  \norm {y-z}_2 =\inner{y-z}{y-z} \le \inner{\gam}{\gam}_{\vb N_I^{-1}} \le \inner{\gam_-}{\gam_-}_{\vb N_I^{-1}} \le \frac {1}{\sigma_J(z)} \big (\sum _{i\in \tilde I_-} \gamma^2_i (y) \big )^{1\over 2} = \frac {1}{\sigma_J(z)} \norm{\gam_{-}}_2.
\]
\end{proof}

\subsection{{\L}ojasiewicz distance inequality}
\label{subsec::loja_distance}
{We now describe the {\L}ojasiewicz exponent $L_{E,G}$ and constant 
$\cst_{E,G}$ between $E$ and $G$ (see \eqref{eq:loja_E_G}) under the CQC assumption (\Cref{def::CQC}).  First note that, trivially, because $g_i$ are polynomials and $E$ is the Euclidean distance, $L_{E,G}\ge 1$.}

Let $\sigma_J =\inf_{z\in \partial S} \sigma_J(z) =\inf_{z\in \partial S} \sigma_{\min}(J(z))$. Notice that $\sigma_J > 0$ as $\partial S$ is compact and $\sigma_{\min}(J(z))$ is lower semicontinuous.
Let $I=I(z)$ and let $I_- = I_- (y) = \{i\in I \colon \g_i(y) \le 0\}$.  
Note that we do not have necessarily that $I_- = \tilde I_-$ (see \Cref{eq::I} and \Cref{eq::I-}):
the sign of $g_i(y)$ might be different from the sign of $\gamma_i(z)$.

{In Proposition \ref{lem:bound_Igam} we have obtained a bound in terms of the linear part  $\gam$ of $\vb g_I$.  
Now we are going to deduce from it an analogous bound in terms of $\vb g_I$.} To do this, we determine how close $\vb g_-$ and $\gam_-$ are.

\begin{lemma}\label{lem::small_diff}
With the notation above, we have:
\[
\big |\norm{\vb g_-}_2  - \norm{\gam_-}_2
 \big | \le \cst_2 \norm {y-z}_2^2.
\]    
\end{lemma}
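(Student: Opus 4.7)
The plan is to exploit the Taylor identity $\vb g_I(y) = \gam(y) + \vb h$ already established in \eqref{eq::g_Jacobian}, together with the quadratic remainder bound $\norm{\vb h}_2 \le \cst_2 \norm{y-z}_2^2$ from \eqref{eq:hbound}, and to transfer this control from the full vectors to their negative parts by a Lipschitz argument. The reverse triangle inequality then immediately yields the claim.

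First, I would note that the scalar function $t \mapsto \min\{0,t\}$ is $1$-Lipschitz on $\RR$, so the componentwise negative-part map $\RR^{|I|} \to \RR^{|I|}$ sending $v \mapsto v_- := (\min\{0,v_i\})_{i\in I}$ is $1$-Lipschitz with respect to the Euclidean norm: for any $u, v$, $\norm{u_- - v_-}_2^2 = \sum_{i \in I}(\min\{0,u_i\} - \min\{0,v_i\})^2 \le \sum_{i \in I}(u_i - v_i)^2 = \norm{u-v}_2^2$. Applying this to $u = \vb g_I(y)$ and $v = \gam(y)$ and using \eqref{eq::g_Jacobian} together with \eqref{eq:hbound}, I obtain
\[
\norm{\vb g_- - \gam_-}_2 \le \norm{\vb g_I - \gam}_2 = \norm{\vb h}_2 \le \cst_2 \norm{y-z}_2^2.
\]

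To conclude, I would apply the reverse triangle inequality $\big|\norm{\vb g_-}_2 - \norm{\gam_-}_2\big| \le \norm{\vb g_- - \gam_-}_2$ and chain it with the bound above to get $\big|\norm{\vb g_-}_2 - \norm{\gam_-}_2\big| \le \cst_2 \norm{y-z}_2^2$, as desired. There is no real obstacle here: the only conceptual point is observing that the negative-part projection onto the (closed convex) negative orthant is nonexpansive, which is a standard fact and reduces everything to the Taylor remainder estimate already provided. Note that the index sets $I_-$ and $\tilde I_-$ need not coincide, but this is irrelevant for the argument since the componentwise negative part commutes with the decomposition and the Lipschitz bound uses all coordinates of $I$ simultaneously.
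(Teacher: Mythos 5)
Your proof is correct and takes essentially the same route as the paper: the paper also bounds $\bigl|\norm{\vb g_-}_2 - \norm{\gam_-}_2\bigr|$ by $\bigl(\sum_{i\in I}(g_i(y)-\gamma_i(y))^2\bigr)^{1/2} = \norm{\vb h}_2$ and invokes \eqref{eq:hbound}, handling the mismatch between $I_-$ and $\tilde I_-$ via the observation that when $g_i(y)$ and $\gamma_i(y)$ have different signs their absolute values are bounded by $\abs{g_i(y)-\gamma_i(y)}$. Your packaging of that observation as the $1$-Lipschitz property of the componentwise negative-part map is just a cleaner statement of the same step.
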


\begin{proof}
Note that if $g_i (y)$ and $\gamma_i (y)$ are of different signs
then their absolute values are bounded by $\abs{g_i (y) - \gamma_i (y)}$.
Therefore, by standard triangle inequality,
\[
\big |\norm{\vb g_-}_2  - \norm{\gam_-}_2
 \big | = \big |\big (\sum _{i\in I_-} g^2_i (y) \big )^{1/2} - 
\big (\sum _{i\in \tilde I_-} \gamma^2_i (y) \big )^{1/2} \big | \le 
\big (\sum _{i\in I} (g_i (y) - \gamma_i (y))^2 \big )^{1/2}
= \norm{\vb h}_2\le \cst_2 \norm {y-z}_2^2,
\]   
where the latter inequality follows from \eqref{eq:hbound}.
\end{proof}
We first show the \L ojasiewicz inequality with $\textit{L}_{E,G} = 1$ locally at $z$.
\begin{prop}\label{prop:bound_near_S}
If $E(y) = \norm{y-z}_2 \le \frac {\sigma_J} {2 \cst_2}$ then 
\[
E(y)\le \frac {2\sqrt{n}} {\sigma_J} G(y).
\]
\end{prop}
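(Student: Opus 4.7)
The plan is to chain together the three main preceding results (Proposition \ref{lem:bound_Igam}, Lemma \ref{lem::small_diff}, and inequality \eqref{eq:G_between_norms}) and then use the hypothesis $E(y) \le \frac{\sigma_J}{2\cst_2}$ to absorb a quadratic remainder term into the left-hand side.

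More precisely, fix $z\in S$ with $E(y)=\|y-z\|_2$. Since $y\in D$ and CQC holds, Proposition \ref{lem:bound_Igam} applied at $z$ gives
\[
\|y-z\|_2 \;\le\; \frac{1}{\sigma_J(z)}\,\|\gam_-\|_2.
\]
By the triangle inequality on norms together with Lemma \ref{lem::small_diff},
\[
\|\gam_-\|_2 \;\le\; \|\vb g_-\|_2 + \bigl|\,\|\vb g_-\|_2 - \|\gam_-\|_2\,\bigr| \;\le\; \|\vb g_-\|_2 + \cst_2\,\|y-z\|_2^{\,2}.
\]
Combining these, and using $\sigma_J(z)\ge \sigma_J$ (since $\sigma_J$ is the infimum over $\partial S$), I obtain
\[
\|y-z\|_2 \;\le\; \frac{1}{\sigma_J}\,\|\vb g_-\|_2 + \frac{\cst_2}{\sigma_J}\,\|y-z\|_2^{\,2}.
\]

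Here the hypothesis enters: since $\|y-z\|_2\le \frac{\sigma_J}{2\cst_2}$, we have $\frac{\cst_2}{\sigma_J}\|y-z\|_2 \le \frac{1}{2}$, so the quadratic term is bounded by $\frac{1}{2}\|y-z\|_2$. Moving it to the left-hand side yields
\[
\tfrac{1}{2}\,\|y-z\|_2 \;\le\; \frac{1}{\sigma_J}\,\|\vb g_-\|_2,
\qquad\text{i.e.,}\qquad
\|y-z\|_2 \;\le\; \frac{2}{\sigma_J}\,\|\vb g_-\|_2.
\]
Finally, since $\|\vb g_-\|_2 = \Gm(y)$, inequality \eqref{eq:G_between_norms} gives $\|\vb g_-\|_2 \le \sqrt{n}\,G(y)$, and we conclude
\[
E(y) = \|y-z\|_2 \;\le\; \frac{2\sqrt{n}}{\sigma_J}\,G(y).
\]

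There is no real obstacle here; the whole argument is essentially assembly. The only subtle point is remembering that while Proposition \ref{lem:bound_Igam} is stated with $\gam_-$ (the negative part of the linear Taylor approximation of $\vb g_I$), the output $G(y)$ is controlled by $\vb g_-$ (the actual negative constraint values), and Lemma \ref{lem::small_diff} is exactly what bridges these two—at the cost of a quadratic error $\cst_2\|y-z\|_2^2$, which is precisely what the assumed bound on $E(y)$ is designed to neutralize.
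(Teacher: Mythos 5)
Your proof is correct and follows essentially the same route as the paper's: chain Proposition \ref{lem:bound_Igam} with Lemma \ref{lem::small_diff}, absorb the quadratic remainder $\frac{\cst_2}{\sigma_J}\|y-z\|_2^2 \le \frac12 E(y)$ using the hypothesis, and finish with $\|\vb g_-\|_2 = \Gm(y) \le \sqrt{n}\,G(y)$ from \eqref{eq:G_between_norms}. The only (harmless) extra care you take is spelling out $\sigma_J(z)\ge\sigma_J$, which the paper leaves implicit.
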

\begin{proof}
Fix $y\not \in S$ such that $E(y) \le \frac {\sigma_J} {2 \cst_2}$ and $z\in \partial S$ such that 
$\norm {y-z}_2 = E(y)$.  
If $E(y) \le \frac {\sigma_J} {2 \cst_2}$ or, equivalently
$ \frac{\cst_2}{\sigma_J}  E^2(y) \le \frac 1 2 E(y)$,
 then by \Cref{lem:bound_Igam} and \Cref{lem::small_diff} we have
\begin{align*}
E(y) =   \norm{y-z}_2 & \le    \frac{1}{\sigma_J} \norm{\gam_{-}}_2
\le \frac{1}{\sigma_J} \norm{\vb g_{-}}_2 + \frac{1}{\sigma_J}  \cst_2 \norm {y-z}_2^2  \\ 
&  \le  \frac{1}{\sigma_J} \norm{\vb g_{-}}_2 +  \frac 1 2 E(y) .
\end{align*}
This implies the claimed inequality as $\norm{\vb g_{-}}_2 = \Gm (y) \le  \sqrt{n} \,  G(y)$
(since $|I_- (z,y)|\le |I(z)| \le n$ under CQC at $z$).
\end{proof}

We are finally able to prove that $L_{E,G} = 1$.
We denote $U  =\{ \, y \in D \mid E(y) < \frac {\sigma_J} {2 \cst_2} \, \}$ the open neighborhood of $S$
of points at distance $< \frac {\sigma_J} {2 \cst_2}$.

\begin{theorem} \label{thm:main_theorem}
Suppose that the CQC holds at every point of $\cS(\vb g)$.  Then, for all $y\in D$,
\[
E(y) \le \cst_{E,G}  G(y) , 
\]
with $\cst_{E,G} = \sup\{ \frac{E(y)}{G(y)}\mid y\in D\setminus S\} \le \max( \frac{2 \sqrt{n}}{\sigma_J}, \frac{\diam(D)}{G^*})$,  
where $\displaystyle G^*= \min_{y \in D \setminus U} G(z)>0$ and $\diam(D) = \max_{x,y \in D} \norm{x-y}_2$. 
\end{theorem}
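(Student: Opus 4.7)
The plan is to split $D \setminus S$ into the near region $U \setminus S$, where Proposition \ref{prop:bound_near_S} applies pointwise, and the far region $D \setminus U$, where a crude bound via the diameter of $D$ and a positive lower bound for $G$ suffices. The theorem is then just an assembly of these two bounds, together with a compactness argument to guarantee that $G^*$ is strictly positive.

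For the near region I would argue as follows: given $y \in U \setminus S$, the compactness of $S$ produces a nearest point $z \in S$ with $\norm{y-z}_2 = E(y)$. Since $E(y) < \sigma_J/(2\cst_2)$ by definition of $U$, and CQC holds at $z$ by hypothesis, Proposition \ref{prop:bound_near_S} applies directly and yields
\[ E(y) \le \frac{2\sqrt{n}}{\sigma_J}\, G(y). \]

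For the far region, the first task is to verify that $G^* > 0$. The set $D \setminus U = \{\,y \in D : E(y) \ge \sigma_J/(2\cst_2)\,\}$ is closed in the compact set $D$, hence compact. The function $G$ is continuous on $D$ (being the negative of a minimum of continuous semi-algebraic expressions) and $S \subset U$ (since $E \equiv 0$ on $S$), so $G$ is strictly positive on $D \setminus U$ and attains a positive minimum $G^*$ there by Weierstrass. Then for any $y \in D \setminus U$ we have the trivial bounds $E(y) \le \diam(D)$ and $G(y) \ge G^*$, giving $E(y)/G(y) \le \diam(D)/G^*$.

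Combining these two bounds over the partition $D \setminus S = (U \setminus S) \cup (D \setminus U)$ produces the claimed estimate $\cst_{E,G} \le \max\bigl(2\sqrt{n}/\sigma_J,\ \diam(D)/G^*\bigr)$. The only delicate point is the justification that $G^* > 0$, which hinges on compactness of $D \setminus U$ together with continuity of $G$ and the containment $S \subset U$; the rest is a straightforward gluing of the local inequality from Proposition \ref{prop:bound_near_S} with a trivial global bound.
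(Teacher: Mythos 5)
Your proposal is correct and follows essentially the same argument as the paper: split according to whether $E(y)$ is below the threshold $\frac{\sigma_J}{2\cst_2}$ (where Proposition \ref{prop:bound_near_S} gives the bound $\frac{2\sqrt{n}}{\sigma_J}G(y)$) or not (where $E(y)\le \diam(D)\le \diam(D)\,G(y)/G^*$, with $G^*>0$ by compactness of $D\setminus U$ and continuity of $G$). The paper's proof is exactly this two-case gluing, with the positivity of $G^*$ noted parenthetically just as you justify it.
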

\begin{proof}
If $E(y)\le \frac {\sigma_J} {2 \cst_2}$ then by Proposition \ref{prop:bound_near_S} we have
\[
E(y)\le  \frac{2 \sqrt{n}} {\sigma_J} G(y).
\]
Otherwise:
\[
E(y)= \|y-z\| \le \diam(D) \le \diam(D) {G(y) \over G^*},
\]
since $y,z\in D$ (notice that, as $G(x)>0$ on the compact set $D\setminus U$, we have $G^*>0$). 
\end{proof}

We want now to give another description of the constant $\cst_{E,G}$ in \Cref{thm:main_theorem} as the distance from \emph{singular systems}, following the approach of \cite{cucker_numerical_2009}.
In other words, we show how $\cst_{E,G}$ can be interpreted as the \emph{condition number} of $\vb g$.
See also \cite{burgisser_condition_2013} for more about condition numbers.

For $\vb d= (d_1,\ldots, d_r)$, let $\RRg_{\vb d} \coloneqq \RRg_{d_1}\times \cdots \times \RRg_{d_r}$
denote the systems of polynomials of bounded degree, which we equip with the Euclidean norm $\norm{\cdummy}_2$ with respect to the monomial basis in any component
(another choice could be the apolar or Bombieri-Weil norm $\norm{\cdummy}_{d_i}$ in degree $\le d_i$ in every component, see \cite{cucker_numerical_2009}).

We say that a system $\vb g$ is singular if there exists a point in $x \in \RR^n$ such that $x \in \cS(\vb g)$
and the active constraints have rank deficient Jacobian at $x$.
In other words, this is the set of systems $\vb g$ such that CQC
does not hold at some point of the semi-algebraic set $S$ defined by $\vb g$. Formally:
\begin{align}
\label{eq::sing_systems}
\begin{aligned}
    \Sing \coloneqq \big\{ \, \vb g \in \RRg_{\vb d} \mid \exists x \in \RR^n \colon \bigvee_{Z \subset \{ 1,\dots , r\}} & \big( g_j(x) = 0 \quad \forall j \in Z \\ &\land g_j(x) > 0 \quad \forall j \notin Z \\ &\land  \rank \jac (g_j (x) \colon j \in Z) < \min(n, \abs{Z})\big) \,\big\}
\end{aligned}
\end{align}

We want to relate the constant $\cst_D$ in \Cref{thm:main_theorem} with $\dist{\vb g}{\Sing}$,
the distance from $\vb g$ to the singular systems induced from the Euclidean norm.
Notice that $\Sing$ is a semi-algebraic set
(by Tarski--Seidenberg principle \cite[th.~2.2.1]{bochnak_real_1998}
or quantifier elimination\cite[prop.~5.2.2]{bochnak_real_1998}), and therefore $\dist{\cdot }{\Sing}$
is a well-defined continuous semi-algebraic function \cite[prop.~2.2.8]{bochnak_real_1998}.
\begin{lemma}
\label{lem::cst_J_dist}
    Under the normalization assumption \eqref{assum::norm} and with the previous notations,
    we have $\DD \le \sqrt{2} \sigma_{J}$.
\end{lemma}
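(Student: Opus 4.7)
The plan is to exhibit, for each $z \in \partial S$, an explicit singular system $\tilde{\vb g} = \vb g - \vb h \in \Sing$ with $\|\vb h\|_2 \le \sqrt{2}\,\sigma_J(z)$; infimizing over $z \in \partial S$ then yields $\DD \le \sqrt{2}\,\sigma_J$. Since $\sigma_J(z) = \sigma_{\min}(J(z))$ quantifies how close $J(z)$ is to being rank-deficient, the natural candidate is a rank-one linear perturbation of only the active constraints, aligned with the smallest singular direction.

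Concretely, I would fix $z \in \partial S$, let $I = I(z)$ and $\sigma = \sigma_J(z)$, and pick a unit right singular vector $\vb u \in \RR^{|I|}$ of $J(z)$ with $J(z)\vb u = \sigma \vb v$ for some unit $\vb v \in \RR^n$ (arbitrary when $\sigma = 0$). The perturbation to use is $h_i(x) = \sigma u_i\, \vb v \cdot (x - z)$ for $i \in I$ and $h_j = 0$ for $j \notin I$. Three quick checks would then certify $\tilde{\vb g} \in \Sing$ via the witness $Z = I$: (a) $\tilde g_i(z) = 0$ for $i \in I$ since $h_i(z) = 0$; (b) $\tilde g_j(z) = g_j(z) > 0$ for $j \notin I$; and (c) $\grad \tilde g_i(z) = \grad g_i(z) - \sigma u_i \vb v$ gives
\[
\sum_{i \in I} u_i\, \grad \tilde g_i(z) \;=\; J(z)\vb u - \sigma \|\vb u\|_2^2\, \vb v \;=\; \sigma \vb v - \sigma \vb v \;=\; 0,
\]
so the perturbed active Jacobian has linearly dependent columns. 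Under CQC, $|I| \le n$, so $\min(n,|I|) = |I|$ and the rank-drop condition of \eqref{eq::sing_systems} is verified.

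The remaining step is the norm estimate. In the monomial basis, the linear polynomial $\vb v \cdot (x - z) = \sum_j v_j x_j - \vb v \cdot z$ has squared $L^2$ norm $\|\vb v\|_2^2 + (\vb v \cdot z)^2 \le 1 + \|z\|_2^2 \le 2$, where the last step uses the normalization assumption \eqref{assum::norm} which forces $\|z\|_2 \le 1$ for $z \in S$. Hence $\|h_i\|_2^2 \le 2\sigma^2 u_i^2$, and summing over $i \in I$ yields $\|\vb h\|_2^2 \le 2\sigma^2 \|\vb u\|_2^2 = 2\sigma^2$, so $\DD \le \|\vb h\|_2 \le \sqrt{2}\,\sigma_J(z)$. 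The main obstacle is identifying the right shape of $\vb h$: once one observes that a purely linear perturbation of just the active constraints preserves the zero set and sign pattern at $z$ while exactly killing the smallest singular direction, everything reduces to the variational definition of $\sigma_{\min}$ and an elementary norm computation, with the constant $\sqrt{2}$ arising from the constant-plus-linear split of $h_i$ under $\|z\|_2 \le 1$.
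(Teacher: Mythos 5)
Your proof is correct and follows essentially the same route as the paper: a rank-one linear perturbation of the active constraints, vanishing at $z$, whose Jacobian realizes the Eckart--Young minimal perturbation of norm $\sigma_{\min}(J(z))$, with the factor $\sqrt 2$ coming from bounding the constant coefficient by the linear ones via $\norm{z}_2\le 1$. The only difference is cosmetic: you write the Eckart--Young perturbation explicitly through the singular vectors $\vb u,\vb v$, and you infimize over $z$ rather than assuming the infimum is attained, which is if anything slightly cleaner.
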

\begin{proof}
Let $z \in \partial S$ be such that $\sigma_J = \sigma_{\min}(J(z))$.
Since the CQC hold at $z$, $\rank J(z)$ is maximal.
On the following, we assume that all the inequalities are active at $z$,
the general case being a trivial generalization.
By the Eckart-Young theorem, the distance of $J(z)$ from rank deficient matrices is equal to
$\sigma_{\min}(J(z))$:
there exists $P$ (of rank one) such that $J(z) - P$ has not maximal rank and
$\norm{P}_F = \norm{P}_2 = \sigma_{\min}(J(z))$.
Now consider a system $\vb l$ of affine equations vanishing at $z$ and such that
$\jac (\vb l)(z) = P$. Therefore, $\vb g - \vb l \in \Sing$ since
$\jac (\vb g - \vb l) (z) = J(z) - P$ is rank deficient and $(\vb g - \vb l)(z) = 0$.
Now, notice that:
\begin{equation*}
  \DD \le \norm{\vb g - (\vb g - \vb l)}_2 = \norm{\vb l}_2
\end{equation*}
Write $\vb l = l_1, \dots l_r$ and $l_i(x) = l_{i0} + \sum_{j=1}^n l_{ij} x_j$.
By hypothesis $l_i(z) = 0$ and $\norm{z}_2^2 \le 1$ (from the normalization assumption).
Therefore:
\[
  l_{i0}^2 = (\sum_{i=1}^n l_i x_i)^2 \le \norm{(l_{i1}, \dots , l_{in})}_2^2 \, \norm{z}_2^2\le  \sum_{j=1}^n l_{ij}^2
\]
Notice also that $\sigma_J^2 = \norm{P}_F^2 = \sum_{i=1}^r \sum_{j=1}^n l_{ij}^2$, and thus:
\[
  \DD^2 \le \norm{\vb l}_2^2 = \sum_{i=1}^r \sum_{j=1}^n l_{ij}^2 + \sum_{i=1}^r l_{i0}^2 \le 2 \sum_{i=1}^r \sum_{j=1}^n l_{ij}^2 = 2\sigma_J^2
\]
which concludes the proof.
\end{proof}
In order to measure the distance to $\Sing$, we introduce a global equivalent to $G^*$ in \cref{thm:main_theorem}.
We define then $\displaystyle \widetilde{G}^* \coloneqq \min_{y \in \RR^n \setminus U} G(z)>0$.

\begin{lemma}
\label{lem::G_interior}
    Let $U$ be as in \Cref{thm:main_theorem} and assume that $\widetilde{G}^* = G(y)$ is not attained on $ \partial U$. Then $\frac{1}{\widetilde{G}^*} \le \sqrt{r} \DD^{-1}$.
\end{lemma}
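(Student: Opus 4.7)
The plan is to exhibit a polynomial perturbation $\vb l$ with $\vb g - \vb l \in \Sing$ and $\norm{\vb l}_2 \le \sqrt{r}\,\widetilde{G}^*$; as $\DD \le \norm{\vb l}_2$ by definition of $\DD$, this directly yields $\DD \le \sqrt{r}\,\widetilde{G}^*$, which is equivalent to the claim $1/\widetilde{G}^* \le \sqrt{r}/\DD$. First I would use the hypothesis to upgrade $y$ to a genuine local minimum of $G$ on $\RR^n$: not lying on $\partial U$ places $y$ in the open interior of $\RR^n \setminus U$, and $U \supseteq S$ forces $y \notin S$, so $G(y) = \widetilde{G}^* > 0$.

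Next I would extract a linear dependence among the active gradients. With the scaling $\norm{g_i}_B = 1$ we have $G(y) = \max(0, -g_1(y), \dots, -g_r(y))$; writing $I_- = \{i : -g_i(y) = G(y)\}$ for the indices realizing the maximum, the first-order necessary condition for $y$ to be a local minimum of the max function $G$ gives $0 \in \conv\{-\grad g_i(y) : i \in I_-\}$, i.e.\ coefficients $\lambda_i \ge 0$ with $\sum_{i \in I_-} \lambda_i = 1$ and $\sum_{i \in I_-} \lambda_i \grad g_i(y) = 0$. In particular, $\{\grad g_i(y) : i \in I_-\}$ is linearly dependent.

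The perturbation is then the constant tuple $l_i = g_i(y)$ for $i \in Z := \{i : g_i(y) \le 0\}$ and $l_i = 0$ otherwise. By construction $(g_i - l_i)(y) = 0$ for $i \in Z$ and $(g_j - l_j)(y) = g_j(y) > 0$ for $j \notin Z$, so $y \in \cS(\vb g - \vb l)$ with active set exactly $Z$. Since $\vb l$ is constant, the active Jacobian at $y$ equals $\jac(g_i : i \in Z)(y)$; the dependence inherited from $I_- \subseteq Z$ makes its rank strictly less than $|Z|$, so the triple $(\vb g - \vb l, y, Z)$ witnesses $\vb g - \vb l \in \Sing$ whenever $|Z| \le n$. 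Because $|g_i(y)| \le G(y) = \widetilde{G}^*$ for every $i \in Z$, we obtain $\norm{\vb l}_2^2 = \sum_{i \in Z} g_i(y)^2 \le |Z|\,(\widetilde{G}^*)^2 \le r\,(\widetilde{G}^*)^2$, and $\DD \le \norm{\vb l}_2 \le \sqrt{r}\,\widetilde{G}^*$ follows. The main obstacle is the case $|Z| > n$, in which $\Sing$-membership demands $\rank < n$ rather than merely $< |Z|$; I expect to cover it either by pairing $\vb l$ with an additional affine term along a smallest left singular direction of the active Jacobian (mirroring the rank-one perturbation used in the proof of \Cref{lem::cst_J_dist}) or by shrinking $Z$ to a subset of size $\le n$ whose gradients are still linearly dependent, while keeping the $\sqrt{r}\,\widetilde{G}^*$ bound intact.
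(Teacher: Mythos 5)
Your strategy matches the paper's: the hypothesis makes $y$ an unconstrained local minimizer of $G$ with $G(y)=\widetilde G^{*}>0$, the first-order condition at $y$ yields a degeneracy of the active gradients, and a constant perturbation of norm at most $\sqrt r\,\widetilde G^{*}$ then lands $\vb g$ in $\Sing$. The differences are in execution and they cut both ways. You handle the optimality condition more carefully than the paper: the paper simply asserts $\grad g_1(y)=0$, which is only literally true when a single constraint attains the maximum defining $G$ near $y$, whereas your condition $0\in\conv\{-\grad g_i(y):i\in I_-\}$, hence linear dependence of the gradients indexed by the argmax set, is the correct general statement. Conversely, your perturbation $l_i=g_i(y)$ on $Z=\{i: g_i(y)\le 0\}$ makes the active set of the perturbed system the possibly large set $Z$, while the paper's uniform shift $g_i\mapsto g_i+\widetilde G^{*}$ keeps the active set equal to the smaller argmax set $I_-\subseteq Z$ (and has norm exactly $\sqrt r\,\widetilde G^{*}$). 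Your enlarged active set is precisely what makes the $|Z|>n$ case loom larger in your write-up.

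That case is the one genuine loose end, and neither of your proposed repairs closes it as stated. Shrinking $Z$ to some $Z'$ leaves $(g_j-l_j)(y)\le 0$ for the discarded indices, violating the strict inequalities required in the definition of $\Sing$; this is repairable by setting $l_j=g_j(y)-\epsilon$ there and letting $\epsilon\to 0$. More seriously, a minimal positive dependence $0=\sum_{i}\lambda_i\grad g_i(y)$ can involve exactly $n+1$ gradients any $n$ of which are linearly independent (e.g.\ three unit vectors at angles $120^{\circ}$ in $\RR^2$); then no subset of size $\le n$ is linearly dependent, the active Jacobian has rank $n=\min(n,\abs{Z})$, and the rank condition in \eqref{eq::sing_systems} fails, while adding a rank-one singular-direction term would push $\norm{\vb l}_2$ above $\sqrt r\,\widetilde G^{*}$. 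You should know that the paper's own proof does not address this corner case either (a zero column does not force rank $<n$ when more than $n$ constraints are active and the remaining gradients span $\RR^n$); adopting the uniform shift at least reduces the problem to the argmax set $I_-$, after which the argument is complete whenever $\abs{I_-}\le n$.
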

\begin{proof}
Without loss of generality assume that $g_1(y) = -\widetilde G^*$. Since $y\notin \partial U$ we have $\grad g_1 (y) = 0$. Then the system $(g_1 + \widetilde G^*,  \dots , g_r +\widetilde G^*) \in \Sing$ is a singular system, and $\norm{\vb g - (g_1 + \widetilde G^*, \dots , g_r + \widetilde G^*)}_2 = \sqrt{r}\, \widetilde G^*$. Therefore $\DD \le \sqrt{r}\, \widetilde G^*$, and finally $\frac{1}{\widetilde G^*} \le \frac {\sqrt{r}} {\DD}$.
\end{proof}
\begin{lemma}
\label{lem::g_boundary}
    Assume that $\widetilde{G}^* = G(y)$ is attained at $y \in \partial \{ \, y \in D \mid E(y) \le \frac {\sigma_J}{2 \cst_2} \, \}$. Then $\frac{1}{\widetilde{G}^*} \le \frac{4 \sqrt{n} \cst_2}{\sigma_J^2}$.
\end{lemma}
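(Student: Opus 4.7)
The plan is to exploit the boundary hypothesis directly: since $y\in\partial\{y\in D : E(y) \le \sigma_J/(2\cst_2)\}$, we have the exact value $E(y) = \sigma_J/(2\cst_2)$, which turns the \L ojasiewicz inequality from \Cref{prop:bound_near_S} into a lower bound on $G(y) = \widetilde G^*$. So the proof is essentially a one-line chain of inequalities, and the only real point is to notice that the hypothesis places $y$ inside the regime where the local \L ojasiewicz estimate with exponent $1$ is available.

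More concretely, I would proceed as follows. First I would observe that $y$ lies in the closure of $U$, so in particular $E(y) \le \sigma_J/(2\cst_2)$, which is the hypothesis needed to invoke \Cref{prop:bound_near_S}. That proposition yields
\[
E(y) \le \frac{2\sqrt{n}}{\sigma_J}\, G(y) = \frac{2\sqrt{n}}{\sigma_J}\,\widetilde G^*.
\]
Next I would use the boundary assumption to replace $E(y)$ by its exact value $\sigma_J/(2\cst_2)$, obtaining
\[
\frac{\sigma_J}{2\cst_2} = E(y) \le \frac{2\sqrt{n}}{\sigma_J}\,\widetilde G^*.
\]
Rearranging this inequality immediately gives $\widetilde G^* \ge \sigma_J^2/(4\sqrt{n}\,\cst_2)$, hence $1/\widetilde G^* \le 4\sqrt{n}\,\cst_2/\sigma_J^2$, as claimed.

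I do not anticipate any obstacle: the lemma is designed to complement \Cref{lem::G_interior}, covering the other alternative for where the minimum $\widetilde G^*$ can be attained. The only subtlety is a careful reading of the set $\partial\{y\in D : E(y) \le \sigma_J/(2\cst_2)\}$, which coincides with the level set $\{E(y) = \sigma_J/(2\cst_2)\}\cap D$ (up to boundary contributions from $\partial D$, which do not affect the argument since \Cref{prop:bound_near_S} is valid on all of $D$); once this is noted, the estimate is immediate from \Cref{prop:bound_near_S}.
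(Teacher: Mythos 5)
Your proof is correct and is essentially identical to the paper's: both read the boundary condition as $E(y)=\frac{\sigma_J}{2\cst_2}$, apply \Cref{prop:bound_near_S} to get $\frac{\sigma_J}{2\cst_2}=E(y)\le \frac{2\sqrt{n}}{\sigma_J}\widetilde G^*$, and rearrange. Your parenthetical about possible boundary contributions from $\partial D$ is a reasonable extra caution, but the paper makes the same implicit identification of the boundary with the level set $\{E=\frac{\sigma_J}{2\cst_2}\}$, so nothing further is needed.
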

\begin{proof}
Since $E(y) = \frac {\sigma_J}{2 \cst_2}$, we can apply \Cref{prop:bound_near_S}: \[\frac {\sigma_J}{2 \cst_2} = E(y) \le 2 \sigma_J^{-1}\norm{\vb g_-}_2 \le 2 \sqrt{n} \, \sigma_J^{-1} G(y) = 2 \sqrt{n} \, \sigma_J^{-1} \widetilde G^*.\]
Therefore $\frac{1}{\widetilde G^*} \le 4 \sqrt{n} \cst_2 \sigma_J^{-2}$.
\end{proof}

We deduce from these two lemmas the following bound on \L ojasiewicz constant in terms of the distance from $\vb g$ to the singular systems $\Sing$:
\begin{theorem}
\label{thm::CQC_distance_sing}
Suppose that the CQC holds at every point of $\cS(\vb g)$.
Then, for all $y\in D$,
$$
E(y) \le  \max\big(\frac{\cst}{\DD}, \frac{8\diam(D) \sqrt{n}\, \cst_2}{\DD^2}\big)  G(y),
$$ 
where $\cst_2 = \cst_2(\vb g) = \max_{x\in D}\{ \norm{\mathrm{Hess}(g_i(x))}_2, i=1, \dots,r\} $ and $\cst_1 = \max({2\sqrt{2n}}, \diam (D)\sqrt{r})$.
\end{theorem}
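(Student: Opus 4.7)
The plan is to reduce the statement directly to \Cref{thm:main_theorem} and then rewrite the two quantities appearing there (namely $\sigma_J$ and $G^*$) in terms of the distance $\DD$ to the singular locus, using the three lemmas \Cref{lem::cst_J_dist}, \Cref{lem::G_interior} and \Cref{lem::g_boundary} that were just established.

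First, \Cref{thm:main_theorem} already gives the inequality
\[
E(y) \le \max\!\left(\tfrac{2\sqrt{n}}{\sigma_J},\; \tfrac{\diam(D)}{G^*}\right) G(y),
\]
so what remains is bookkeeping on the two terms inside the maximum. The first term is handled immediately by \Cref{lem::cst_J_dist}: from $\DD \le \sqrt{2}\, \sigma_J$ one gets $\tfrac{2\sqrt{n}}{\sigma_J} \le \tfrac{2\sqrt{2n}}{\DD}$, which already feeds into the $\cst_1/\DD$ part of the claimed bound.

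For the second term I would replace $G^*$ with $\widetilde{G}^*$. Since $D\setminus U \subset \RR^n\setminus U$, taking the infimum over the larger set can only decrease the value, so $\widetilde{G}^* \le G^*$ and hence $1/G^* \le 1/\widetilde{G}^*$. Then I would split on where the (attained) minimum defining $\widetilde{G}^*$ lies: either at an interior point of $\RR^n\setminus U$, in which case \Cref{lem::G_interior} yields $1/\widetilde{G}^* \le \sqrt{r}/\DD$ and so $\diam(D)/G^* \le \diam(D)\sqrt{r}/\DD$; or on $\partial U$, in which case \Cref{lem::g_boundary} gives $1/\widetilde{G}^* \le 4\sqrt{n}\,\cst_2/\sigma_J^2$, and combining once more with \Cref{lem::cst_J_dist} in the form $\sigma_J^{-2} \le 2/\DD^2$ produces $\diam(D)/G^* \le 8\diam(D)\sqrt{n}\,\cst_2/\DD^2$.

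Putting the three resulting upper bounds together, the outer $\max$ becomes
\[
\max\!\left(\tfrac{2\sqrt{2n}}{\DD},\; \tfrac{\diam(D)\sqrt{r}}{\DD},\; \tfrac{8\diam(D)\sqrt{n}\,\cst_2}{\DD^2}\right) = \max\!\left(\tfrac{\cst_1}{\DD},\; \tfrac{8\diam(D)\sqrt{n}\,\cst_2}{\DD^2}\right),
\]
which is the announced constant. The only real subtlety is to justify that the infimum $\widetilde{G}^*$ is indeed attained so that the two lemmas apply — I expect this to be the one mildly delicate point, handled by noting that $G$ is continuous and semi-algebraic on $\RR^n$ and that outside a large enough ball all $g_i$ are bounded below, forcing the infimum of $G$ on $\RR^n\setminus U$ to be a minimum. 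Everything else is a clean substitution.
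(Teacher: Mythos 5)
Your proof is correct and follows essentially the same route as the paper: invoke \Cref{thm:main_theorem}, bound $\frac{2\sqrt{n}}{\sigma_J}$ via \Cref{lem::cst_J_dist}, pass from $G^*$ to $\widetilde G^*$, and split into the two cases covered by \Cref{lem::G_interior} and \Cref{lem::g_boundary} before taking the maximum. The attainment of $\widetilde G^*$ that you flag is indeed left implicit in the paper (its lemmas simply assume it), so raising the coercivity point is a reasonable extra precaution rather than a deviation.
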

\begin{proof}
  We estimate the constant
  $\cst_{E,G} = \sup\{ \frac{E(y)}{G(y)}\mid y\in D\setminus S\} \le \max( \frac{2 \sqrt{n}}{\sigma_J}, \frac{\diam(D)}{G^*})$
  in \Cref{thm:main_theorem} using the previous lemma. In particular, from \Cref{lem::cst_J_dist}
  we have $\frac{1}{\sigma_J} \le \frac{\sqrt{2}}{\DD}$, and
  using \Cref{lem::G_interior} and \Cref{lem::g_boundary} we obtain:
  \begin{align*}
    \frac{2 \sqrt{n}}{\sigma_J} & \le \frac{2 \sqrt{2n}}{\DD} \\
    \frac{\diam(D)}{G^*} &\le \frac{\diam(D)}{\widetilde G^*} \le \diam(D) \max(\frac{4\sqrt{n}\cst_2}{\sigma_J^2}, \frac{\sqrt{r}}{\DD}) \\ &\le \diam(D) \max(\frac{8\sqrt{n}\cst_2}{\DD^2}, \frac{\sqrt{r}}{\DD}) 
  \end{align*}
  Choosing $\cst_1 = \max({2\sqrt{2n}}, \diam (D)\sqrt{r})$ we then see that
  $\cst_{E,G} \le \max\big(\frac{\cst}{\DD}, \frac{8\diam(D) \sqrt{n}\, \cst_2}{\DD^2}\big)$,
  concluding the proof.
\end{proof}
\begin{remark}
  Under the CQC condition, we have analyzed in \Cref{thm:main_theorem} and \Cref{thm::CQC_distance_sing} the \L ojasiewicz constant, giving estimates for it,
  and moreover showing that the \L ojasiewicz exponent is equal to one. On the contrary when the problem is not regular the bounds on the exponent $L_{E,G}$ can be large. We have:
  \[
    L_{E,G} \le d(\vb g)(6d(\vb g) - 3)^{n+r}
  \]
  see \cite[sec.~3.1]{kurdyka_convexifying_2015}, \cite{kurdyka_metric_2016} and the errata \cite{kurdyka_correction_2019}.
  Recently, a new bound independent on the number of inequalities $r$ has been shown in \cite[th.~2]{basu_improved}:
  \[
    L_{E,G} \le d(\vb g)^{O(n^2)}.
  \]
  Finally, let us recall that the first quantitative estimation for the \L ojasiewicz inequality providing a bound with a single exponential in $n$ was given in \cite{solerno_effective}.
  \end{remark}

\begin{remark}
  {The function $G$ can be seen as semialgebraic distance to $S$, since $x\in S$ if and only if $G(x)=0$.
  Using the language of error bounds in optimization, the function $G$ can also be considered as a
  \emph{residual function}, see \cite{pang_error_1997}. Residual functions are used, in the analysis of iterative optimization algorithms, to bound the distance of an approximate solution from the true solution set}. {Using the language of error bounds and residual functions, a result analogous to \Cref{thm::CQC_distance_sing} has been proven in \cite[Prop.~7 and 8]{lewis_error_1998}, when $g_1, \dots , g_r$ are convex functions.}
\end{remark}
\begin{remark}
  {The CQC condition implies that the number of active constraints at every $z \in \cS(\vb g)$ is $\le n$.
  CQC also implies that for every point $y \in \RR^n$ with closest point  $z \in S$,  $y-z$ belongs to the convex cone generated by the gradients of the active constraints, see \Cref{lem::difference_as_gradient}.}
  
  {For convex sets $S$, the set of vectors $y-z$, for points $y$ whose closest point in $S$ is $z$, is called \emph{normal cone at $z$}. Abadie's Constraint Qualification (see e.g. \cite{pang_error_1997}) says that every vector in the normal cone is a conic combination of the gradients of the active constraints. This is the condition used in \cite[Prop.~7 and 8]{lewis_error_1998} to analyze the \L ojasiewicz exponent and constant for convex $g_1, \dots , g_r$.}
  
  {In this section, we could have similarly replaced the CQC condition with the (more general) assumption that for every point $y \in \RR^n$ with closest point  $z \in S$,  $y-z$ is a conic combination of the gradients of the active constraints. In other words, we could have assumed the conclusion of \Cref{lem::difference_as_gradient} instead of the CQC. Indeed, all the proofs of \Cref{sec::loja} can be adapted to this more general setting with minor changes.}
\end{remark}
\section{The Effective Positivstellensatz}
\label{sec::effective_positiv}
We analyze now how non-negative polynomials $\pos(S)$ 
can be approximated by polynomials that can be represented in terms of sums of squares. We quantify how the complexity of this representation, that is the degree of the terms, depends on the non-vanishing of the polynomial and {\L}ojasiewicz exponent and constant of $D$ and $G$.

For $l\in \N$, let $\Sigma^{2,l}\subset \R[\vb X]$ be the set of sums of squares of degree at most $l$, that is, the polynomials of the form $p= \sum_{i} p_i^2$ with $p_i\in \R[\vb X]$ of degree $\le {l\over 2}$. 
We define
$$
\cQ^l=\Sigma^{2,l} + (1-\sum_{i=1}^{n} X_i^2) \Sigma^{2,l-2} + g_1\Sigma^{2,l-d_1}+ \cdots + g_r \Sigma^{2,l-d_r},
$$
where $d_i=\deg(g_i)$ for $i=1,\ldots,r$.

{Recall from \eqref{eq:defG}, \eqref{eq:defF}, and using the notations from \Cref{parameters}, that 
\begin{align*}
    F(x) &= - \frac {1} {\|f\|_{B}} \min(f-f^*,0) \\
    G(x)&=-\min(\frac {g_1(x)} {\| g_1\|_{B}}, \ldots,\frac {g_r(x)} {\| g_r\|_{B}},0)= -\min( {g_1(x)}, \ldots, {g_r(x)},0)
\end{align*}
(by scaling $g_i$ we can assume that $\norm{g_i}_{B}=1$, see the scaling assumption \eqref{assum::scaling}).} We have $\forall x \in D$, $F(x)\ge 0$, $G(x)\ge 0$ and $\forall x \in S$, $F(x)=G(x)=0$. Moreover $G(x)=0$ implies that  $x\in S$ and $F(x)=0$. Also, $F(x)>0$ implies $G(x)>0$.
By the Łojasiewicz theorem, there exists $\cst_{F,G}>0, L_{F,G} \in \R$ such that $\forall x \in D$,
\begin{equation}\label{eq:loja ineq}
 {F(x)}^{L_{F,G}} \le \cst_{F,G} \, G(x).
\end{equation}

We are now ready to state the main result of the article.

\begin{theorem}[Effective Positivstellensatz]
\label{thm::effective_putinar}
 Let $f\in \R[\vb X]$ and $S=\{x\in D \mid g_1(x)\ge 0, \ldots, g_r(x)\ge 0\}$ with $S\subset B=\{x\in \R^n \mid 1-\sum_i x_i^2 \ge 0\}$. If $\forall x\in S$, $f(x)\ge f^* >0$, then $f\in \cQ^m$ for 
 $$
m = O( n^2\, r\, d(\vb g)^{6} \cst^7 \eps^{-(7 L +3) }),
 $$
 where $d(\vb g)= \max_i \deg(g_i)$, $\eps = {\frac {f^*}  {\|f\|_B}}$ and $\cst = \cst_{F,G}, L= L_{F,G}$ are respectively the {\L}ojasiewicz constant and exponent in  Inequality \eqref{eq:loja ineq}.

\end{theorem}

The proof follows the same lines as the proof of \cite[th.~1.7]{baldi_moment_2021}, but we work on the scaled simplex $D$ instead of the box $[-1,1]^n$ and we highlight the dependency of the bounds on {\L}ojasiewicz constant $\cst$, {\L}ojasiewicz exponent $L$ and on $\eps$. 

{
The main differences between \Cref{thm::effective_putinar} and \cite[th.~1.7]{baldi_moment_2021} are two:
\begin{itemize}
    \item we use a different \L ojasiewicz inequality, which leads to a smaller exponent $L$;
    \item we eliminate the explicit dependence on the number of variables $n$ in the exponent of $\epsilon$.
\end{itemize}
These improvements are achieved by introducing the semialgebraic function $F$, see \Cref{eq:defF}, and using the Bernstein norm instead of the max norm. For a more detailed comparison, we refer the reader to \Cref{sec:3.3}.
}
\subsection{Approximation of a plateau function}
The first ingredient is an approximation of a \emph{plateau} or Urysohn function by a sum of squares polynomial with a control of the error and of the degree of the polynomial. Recall that we are working under the scaling assumption \eqref{assum::scaling}: $\norm{g_i}_B=1$ for $i=1, \ldots, r$.

\begin{prop}\label{prop:def hi}For $i=1, \ldots, s$ and $\delta>0, \nu>0$, there exists $h_{i,\delta,\nu}\in \RRg$ such that 
\begin{itemize}
    \item For $g_i(x) \ge 0$,  $|h_{i,\delta,\nu}(x)| \le 2 \nu$. 
    \item For $ {g_i(x)} \le - \delta$, $|h_{i,\delta,\nu}(x)| \ge \frac 1 2$.
    \item $\| h_{i,\delta,\mu}\| \le 1$.
    \item $h_{i, \delta, \nu} \in \Sigma^{2,m}$ with $m =  O(n\, d(\vb g)^2 \delta^{-2} \nu^{- 1})$.
\end{itemize}
\end{prop}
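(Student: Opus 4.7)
The plan is to build $h_{i,\delta,\nu}$ as the square of a Bernstein approximation of a one-dimensional plateau function composed with $g_i$, so that the result is manifestly a single square of controlled degree. I fix a continuous piecewise-affine plateau $\sigma\colon\RR\to[0,1]$ with $\sigma(t)=0$ for $t\ge 0$, $\sigma(t)=1$ for $t\le-\delta$, and $\sigma$ affine on $[-\delta,0]$, so that $\sigma$ is Lipschitz with constant $1/\delta$. Setting $\sigma_i(x)\assign\sigma(g_i(x))$ gives a continuous function on $D$ that vanishes where $g_i\ge 0$ and equals $1$ where $g_i\le-\delta$. I then take $\tilde\sigma_i\assign\ca B^D_m(\sigma_i;\cdot)$ and define $h_{i,\delta,\nu}\assign\tilde\sigma_i^{\,2}$, which is a single square and hence lies in $\Sigma^{2,2m}$; positivity of $\ca B^D_m$ gives $0\le\tilde\sigma_i\le 1$, so $\|h\|_\infty\le 1$ after absorbing constants.

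To estimate $m$, I would bound the modulus of continuity of $\sigma_i$ and plug it into the Bernstein error estimate. Under the scaling assumption $\|g_i\|_B=1$ one has $\|g_i\|_\infty\le 1$, and Markov's inequality (\Cref{thm::markov_inequality}) with $w(D)=1+\sqrt{n}$ yields $\|\grad g_i\|_\infty=O(d(\vb g)^2)$ on $D$. Hence $\sigma_i$ is Lipschitz with constant $O(d(\vb g)^2/\delta)$, so $\omega(\sigma_i;t)\le O(d(\vb g)^2 t/\delta)$. By \Cref{thm:approx}, this gives $\|\tilde\sigma_i-\sigma_i\|_\infty\le 2\,\omega(\sigma_i;2n/\sqrt{m})=O(n\,d(\vb g)^2/(\delta\sqrt{m}))$. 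Once this error is below $\nu$, the three pointwise properties follow directly: if $g_i(x)\ge 0$ then $\sigma_i(x)=0$ and $|h(x)|=\tilde\sigma_i(x)^2\le\nu^2\le 2\nu$; if $g_i(x)\le-\delta$ then $\sigma_i(x)=1$ and $\tilde\sigma_i(x)\ge 1-\nu$, giving $h(x)\ge 1/2$.

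The main obstacle is reaching the sharp bound $m=O(n\,d(\vb g)^2\delta^{-2}\nu^{-1})$ announced in the statement: the crude first-order Bernstein rate above produces only $m=O(n^2 d(\vb g)^4\delta^{-2}\nu^{-2})$, which is off by factors of $n$, $d(\vb g)^2$ and $\nu^{-1}$. To recover the target exponent one should (i) replace the piecewise-affine $\sigma$ by a $C^2$ plateau with $\|\sigma''\|_\infty=O(1/\delta^2)$ and invoke a second-order Bernstein estimate with rate $O(\|\psi''\|_\infty/m)$ rather than the first-order $O(\omega(\psi;1/\sqrt{m}))$, trading $1/\sqrt{m}$ for $1/m$ and fixing the $\nu$-dependence; and (ii) bound the second moment $\ca B^D_m(\|\cdot-x\|_2^2;x)$ projected onto the direction of $\grad g_i(x)$ rather than through the full Euclidean second moment used in the proof of \Cref{thm:approx}, which should reduce the $n^2 d(\vb g)^4$ prefactor to $n\,d(\vb g)^2$. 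This refinement is the delicate step; the final squaring costs only a factor two in the degree and preserves the plateau bounds up to constants absorbed in the $O(\cdot)$ notation.
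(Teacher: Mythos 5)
Your overall strategy is the paper's: compose a one-dimensional plateau function with $g_i$, approximate by the Bernstein operator $\ca B^D_{m}$, and square the result to land in $\Sigma^{2,2m}$. However, there is a genuine gap, which you correctly detect but then propose to close by the wrong mechanism. Your plateau $\sigma$ takes the value $0$ on $\{t\ge 0\}$, so to get $h=\tilde\sigma_i^2\le 2\nu$ on $\{g_i\ge 0\}$ you are forced to demand Bernstein error $\lesssim\sqrt{2\nu}$ at best, yet you ask for error $\le\nu$, and your accounting then yields $m=O(n^2 d(\vb g)^4\delta^{-2}\nu^{-2})$. The repair you sketch (a $C^2$ plateau plus a second-order Bernstein estimate, and a directional second-moment bound) is not carried out, and it is not what is needed: a second-order rate for the composite $\sigma\circ g_i$ would involve $\|\mathrm{Hess}(\sigma\circ g_i)\|\lesssim \|\sigma''\|\,\|\nabla g_i\|^2+\|\sigma'\|\,\|\mathrm{Hess}\,g_i\|$, and it would at best fix the $\nu$-exponent while leaving you to prove a nonstandard directional moment bound for $\ca B^D_m$ that the paper never uses.

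The paper's fix is elementary and you should adopt it: choose the plateau to take the value $\sqrt{\nu}$ (not $0$) on $\{t\ge 0\}$ and $1$ on $\{t\le-\delta\}$, still with $|\varphi'|\le 2/\delta$. Then an approximation accuracy of only $\tfrac14\sqrt{\nu}$ suffices: on $\{g_i\ge 0\}$ one gets $s_i\le\tfrac54\sqrt{\nu}$, hence $h_i=s_i^2\le\tfrac{25}{16}\nu\le 2\nu$ after squaring, and on $\{g_i\le-\delta\}$ one gets $s_i\ge 1-\tfrac14\sqrt{\nu}$, hence $h_i\ge\tfrac12$. Requiring error $\le\tfrac14\sqrt{\nu}$ instead of $\le\nu$ in the first-order estimate $O(n^{1/2}d(\vb g)^2\delta^{-1}m^{-1/2})$ (note that Markov with $w(D)=1+\sqrt{n}$ gives $\|\nabla g_i\|_\infty=O(d(\vb g)^2/\sqrt{n})$, not $O(d(\vb g)^2)$, which saves one factor of $n$) yields $m=O(n\,d(\vb g)^4\delta^{-2}\nu^{-1})$ with no second-order machinery. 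Do not struggle to reach $d(\vb g)^2$: the paper's own proof also produces $d(\vb g)^4$ at this step, and the $d(\vb g)^2$ in the statement appears to be an internal inconsistency of the paper rather than something your argument must recover.
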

\begin{proof}
To construct such a polynomial, we use the following plateau function.
For $\delta>0, \nu >0$, let $\varphi\in  C^0([-1, 1])$ be defined as: 
 $$
 \varphi = \left\{
 \begin{array}{lrcl}
 1 & -1 \le &x &\le -\delta\\
\sqrt  \nu + 3\, \frac {x^2} {\delta^2} (1- \sqrt \nu)
 +2\, \frac {x^3} {\delta^3} (1 - \sqrt \nu)
 & -\delta \le &x& \le 0\\
  \sqrt \nu & 0\le &x&  \le 1
 \end{array}
 \right.
 $$
 We verify that $\varphi$ is in $C^1([0,1])$, that $\varphi(0)= \sqrt \nu$, $\varphi(-\delta)= 1$ and that $\max_{x\in [-1,1]} |\varphi'(x)| \le \frac 2 \delta$.

 Let $\varphi_i = \varphi({g_i} )\in C^0(D)$. We have $\max_{x\in D}|\varphi_i|=1$. 
 
We are going to approximate $\varphi_i$ by a polynomial, using Bernstein operators defined in \eqref{def:bernstein op}. 
We deduce from Theorem \ref{thm:approx} that 
\begin{align}
|\ca B_m(\varphi_i;x)-\varphi_i(x)|& \le 2 \omega(\varphi_i; \frac{2 n} {\sqrt{m}}) \nonumber \\
&\le 2 \, \max_{x\in [-1, 1]} |\varphi'(x)|
\, \omega({g_i}; \frac{2 n} {\sqrt{m}})  \nonumber \\
& \le \frac 8 \delta  \max_{x\in D} {\| \nabla g_i(x)\|_2} \frac{n} {\sqrt{m}} \label{eq:bersntein approx bound}
\end{align}
Using Markov inequalities (Theorem \ref{thm::markov_inequality}),
we have that 
${\max_{x \in D}\| \nabla g_i(x)\|_2} \le  \frac{4\,  d(\vb g)^2}{\sqrt{n}+1} \, \|g_i\|_{\infty}\le \frac{4\,  d(\vb g)^2}{\sqrt{n}+1}$ where $d(\vb g)= \max_i \deg(g_i)$.  
Thus \eqref{eq:bersntein approx bound} implies that 
$$
|\ca B_m(\varphi_i;x)-\varphi_i(x)| \le 32 \,  n^{\frac 1 2}\,  d(\vb g)^2\, \delta^{-1} m^{- \frac 1 2}.
$$ 
Let us take $m'=O(n\, d(\vb g)^4 \delta^{-2} \nu^{-1})$, 
$$
s_i(x)= \ca B_{m'}^D(\varphi_i;x) 
=
\sum_{\alpha\in \N^n,|\alpha|\le m'} \varphi_i(\theta(\frac{\alpha}{m'})) \, B_{m',\alpha}^D(x)
$$
and $h_{i,\delta,\nu} = h_i = s_i^2$ so that 
for $x\in D$, 
$$
|s_i(x) - \varphi_i(x) |  \le \frac 1 4 \sqrt{\nu}
$$
Then we have
\begin{itemize}
     \item $g_i(x)\ge 0$  implies $s_i(x)\le \varphi_i(x)+ \frac 1 4 \sqrt \nu\le \frac 5 4 \sqrt \nu$ and $h_i(x)= s_i(x)^2 \le \frac {25} {16} \nu \le 2 \nu$.
    \item $ {g_i(x)}\le -\delta$ implies $s_i(x)\ge 1-\frac 1 4 \sqrt \nu$ and $h_i(x) \ge (1-\frac 1 4 \sqrt \nu)^2\ge \frac 1 2$ for $\nu$ small enough. 
    \item $\|h_i\| \le \|s_i\|^2 \le 1$.
    \item $h_i= s_i^2 \in  \Sigma^{2,m}$ with $m=2 m'= O(n\, d(\vb g)^2 \delta^{-2} \nu^{-1})$.
\end{itemize}
This concludes the proof of the proposition.
\end{proof}

\subsection{Exponents in the Effective Positivstellensatz}

We can now prove Theorem \ref{thm::effective_putinar}.

\begin{proof}[Proof of Theorem \ref{thm::effective_putinar}.]
Scaling $g_i$ by $\frac{1}{\norm{g_i}_{B}}$ does not change the definition of $S$ and the bound
in Theorem \ref{thm::effective_putinar}. Therefore we can assume hereafter that $\norm{g_{i}}_{B}=1$. 
Let 
\begin{equation}\label{eq:def_p}
p = f - \lambda \sum_{i=1}^{r} h_{i}\, {g_i},
\end{equation}
where $h_i=h_{i,\delta,\nu}$ is defined in \Cref{prop:def hi}.
We consider two cases:

1) $F(x)> {f^*\over 4 \|f\|_{B}}$. Then by {\L}ojasiewicz Inequality \eqref{eq:loja_F_G}, $G(x)> \delta :=  \cst^{-1} \epsilon^{L}$.
There exists $i\in [1,\ldots, r]$ such that $ g_i(x)\le - \delta$, say $i=1$. Then $h_1(x)\ge \frac 1 2$ and $h_i(x)\le 2\nu$ if $g_i(x)\ge 0$. We deduce that for $x\in D$, 
\begin{eqnarray*}
p(x) & \ge & f(x) + \lambda \frac 1 2 \delta - 2 \,\lambda\, {\nu} (r-1)\\
&\ge & f(x) + \lambda {\delta\over 4} + \lambda \left( {\delta \over 4} - {2\, \nu } (r-1) \right).
\end{eqnarray*}
Let $\lambda\gg 0$ such that
$f(x) + \lambda {\delta\over 4} \ge {1\over 4} f^*$
(i.e. $\lambda \ge {5 \over \delta} \|f\|_{B}$ since ${1\over 4}f^*-f(x)\le{5\over 4} \|f\|_{B}$) and 
let $\nu$ be small enough such that ${\delta \over 4} - {2 \nu} (r-1) \ge 0$ (i.e. ${\nu} \le {\delta\over  8 r} $).
Then $p(x)\ge {1\over 4} f^*$.

2) $F(x)\le \frac{f^*} {4 \| f\|_{B}}$. In this case, $f(x)\ge {3 \over 4} f^*$ and 
\begin{eqnarray*}
p(x) & \ge & {3 \over 4} f^* -  2 r\, \lambda \, {\nu}.  
\end{eqnarray*}
Then $p(x)\ge {1\over 4} f^*$ for $2\, r\, \lambda \, {\nu}  \le {1 \over 2} f^*$, i.e. $\nu \le  \, {f^* \over 4\, r\, \lambda }$.

We deduce that for $\delta= \cst^{-1} \epsilon^{L}$, $\lambda = 5 \delta^{-1} \|f\|_{B} = 5 \cst \epsilon^{-L} \|f\|_{B}$ and 
$$
\nu\le\min({\delta\over 8 r }, \, {f^* \over  4r \lambda })  = \frac {1} {20} r^{-1}\,\delta\, \epsilon = \frac {1} {20} \cst^{-1} \epsilon^{L+1},
$$ 
we have $p(x)\ge {1\over 4} f^*>0 $ for all $x\in D$.

Let $\eta = \max \{ \deg(f), \deg( h_i g_i), i = 1, \ldots, r\}$. Then 
$$
\deg(p) \le \eta = O(n\, d(\vb g)^3 \delta^{-2} \nu^{-1})
= O(n\, d(\vb g)^3 (\cst \epsilon^{-L})^2 (\cst \epsilon^{-(L+1)}))
= O(n\, d(\vb g)^3 \cst^3 \epsilon^{-(3L+1)}),
$$
and we have 
\begin{eqnarray*}
\|p\|_{B,\eta}=\| f- \lambda \sum_{i=1}^{r} h_{i}\, {g_i} \|_{B,\eta} &\le & \|f\|_{B,\eta} + \lambda \sum_{i=1}^{r}\|h_i\|_{B,\eta-\deg(g_i)}\, \| {g_i}\|_{B,\deg(g_i)} \le\|f\|_{B,\deg(f)} + \lambda r\\
& \le & (1 + 5 \cst \epsilon^{-L}  r) \, \|f\|_{B}
\le  (6\, r\,  \cst \epsilon^{-L})\, \|f\|_{B}
\end{eqnarray*}
for $\cst \epsilon^{-L}  \ge 1$.

Now, we use the property of convergence of the control polygon in the Bernstein basis on $D$ to the graph of the function.  By \Cref{cor:polya2} applied to the polynomial $p$ defined in \eqref{eq:def_p}, we get that for 
$$ 
m
= O(\deg(p)^2  \frac {\|p\|_{B,\eta}} {p^*}) 
= O(n^2 d(\vb g)^6\cst^6 \epsilon^{-(6L+2)}  r \cst \eps^{-L} \frac {\|f\|_{B}} {f^*})
= O(n^2 r\, d(\vb g)^6 \cst^7 \eps^{-(7L+3)})
$$
the Bernstein coefficients $p_{m,\alpha}$ in the Bernstein basis $(B^D_{m,\alpha})_{|\alpha|\le m}$ are non-negative. We deduce that $p$ belongs to the preordering generated by $1+ X_1, \ldots, 1+X_n, 1-n^{-\frac 1 2} (X_1+ \cdots+ X_n)$ in degree $m$ and conclude as in \cite[lem.~3.8]{baldi_moment_2021} that
$p\in \Sigma^{2,m+n} + (1-X_1^2- \cdots -X_n^2)\, \Sigma^{2,m+n-2}$. 
Therefore 
$f = p + \lambda \sum_{i=1}^{r} h_{i} {g_i}$ belongs to $\cQ^{m+n}$, which concludes the proof.
\end{proof}

\subsection{Some consequences, remarks and perspectives}\label{sec:3.3}

The exponent of $\epsilon^{-1}$ in the  bound of \Cref{thm::effective_putinar} depends on $f$. In order to avoid this dependency of the exponent on $f$, we can use a {\L}ojasiewicz inequality \eqref{eq:loja_E_G} between $E= \dist{\cdot}{S}$ and $G$ and a consequence of Markov inequality to get the following corollary:
\begin{corollary}\label{cor:effective_putinar_EG}

 Let $f\in \R[\vb X]$ with $d=\deg(f)$ and $S=\{x\in D \mid g_1(x)\ge 0, \ldots, g_r(x)\ge 0\}$ with $S\subset B=\{x\in \R^n \mid 1-\sum_i x_i^2 \ge 0\}$. If $\forall x\in S$, $f(x)\ge f^* >0$, then $f\in \cQ^m$ for 
 $$
m = O(  n^2\, r\, d(\vb g)^{6} 2^{\tilde L} d^{2 {\tilde L} } \tilde \cst^7 \eps^{-(7 \tilde L +3) })
 $$
 where $d(\vb g)= \max_i \deg(g_i)$, $\eps = {\frac {f^*}  {\|f\|}}$ and $\tilde \cst = \cst_{E,G}, \tilde L= L_{E,G}$ are respectively {\L}ojasiewicz constant and exponent in  Inequality \eqref{eq:loja_E_G}.
 
\end{corollary}
\begin{proof}
By inequalities \eqref{eq::F_E} and \eqref{eq:loja_E_G}, we have for $x\in D$,
$$
F(x)^{L_{E,G}} \le (2 d^2)^{L_{E,G}} E^{L_{E,G}} \le 2^{L_{E,G}} d^{2 {L_{E,G}} } \cst_{E,G} \, G(x).
$$
Applying \Cref{thm::effective_putinar} with $\cst = 2^{L_{E,G}} d^{2 {L_{E,G}} } \cst_{E,G}$ and $L= L_{E,G}$, we get the desired bound.
\end{proof}

We describe the main differences between \Cref{thm::effective_putinar} and the result in \cite{baldi_moment_2021}.

First, the bound in \Cref{thm::effective_putinar} uses \L ojasiewicz inequality \eqref{eq:loja_F_G}, while in \cite{baldi_moment_2021} 
the authors consider the \L ojasiewicz inequality \eqref{eq:loja_E_G}, as in \Cref{cor:effective_putinar_EG}. Not only it is more natural to work with  \L ojasiewicz inequality \eqref{eq:loja_F_G} instead of  \eqref{eq:loja_E_G}, but it also gives potentially significantly better bounds. For an illustration of this phenomenon, see e.g.  \cite[sec.~4]{de_klerk_error_2010}, where the authors discuss the gap between Lasserre's hierarchies based on the quadratic module and the preordering defining the unit hypercube.
Another advantage of \Cref{thm::effective_putinar} over \Cref{cor:effective_putinar_EG} is that in the case of an exact representation of $f-f^*$ in $Q(\vb g)$, we have $L=1$ (See. \Cref{prop:final_prop}).

On the other hand, \Cref{cor:effective_putinar_EG} allows to deduce a \emph{general} convergence rate for Lasserre's hierarchies, as done in \cite[sec.~4]{baldi_moment_2021}. The convergence rate that can be deduced from \Cref{cor:effective_putinar_EG} improves the one in \cite{baldi_moment_2021}, as there is no dependence on the number of variables $n$ in the exponent of $\epsilon$. As a corollary, we also obtain improved convergence rates for the  Haudorff distance of feasible pseudo moment sequences to moment sequences in the Lasserre's moment hierarchy, see \cite[sec.~5]{baldi_moment_2021}.

The second important difference between this article and \cite{baldi_moment_2021} is the norm used to define $\epsilon = \epsilon(f)$.
In this article, we use the max norm of the coefficients of $f$ in the Berstein basis on the scaled simplex $D$. This allows to use \cite{PowersnewboundPolya2001}, and leads to a bound with no $n$ in the exponent of $\epsilon$. On the other hand, in \cite{baldi_moment_2021} the norm used to define $\epsilon$ is the max norm on $[-1,1]^n$ and the approximation result in \cite{laurent_effective_2021} is exploited. This leads to a convergence rate with $n$ in the exponent of $\epsilon$. We can also rephrase \Cref{thm::effective_putinar} using the max norm on $D$ using the result in \cite{lyche_sup-norm_1997}, which can be stated, with our notation, as follows:
$$ \norm{f}_{B,d} \le K_d(\RR^n) \norm{f}_{\infty}$$
where $ K_d(\RR^n)$, given exactly in \cite[th.~4.2]{lyche_sup-norm_1997}, has asymptotic behaviour as in \cite[th.~5.1]{lyche_sup-norm_1997} when $d$ tends to infinity. Thus, if we use the norm $\norm{f}_{\infty}$ instead of $\norm{f}_{B,m}$ to define $\epsilon$, we need to multiply by the extra factor $K_m(\RR^n)^{-(7L+3)}$ in \Cref{thm::effective_putinar} and \Cref{cor:effective_putinar_EG}. 

When some regularity conditions hold, the bounds on the representation of positive polynomials can be simplified: if the CQC hold, we can apply the results of \Cref{subsec::loja_distance} and obtain the following corollary.
\begin{corollary}
\label{cor::effective_putinar_cqc}
    With the hypothesis of \Cref{thm::effective_putinar}, if the CQC hold for every $x \in S$ and $f>0$ on $S$ then $f \in \cQ^m$ for
     $$
m = O( n^2\, r\, d(\vb g)^6 \, \cst^7\, \eps^{-10}),
 $$
 where $\cst$ can be bounded using \eqref{eq::F_E} and \Cref{thm:main_theorem} or \Cref{thm::CQC_distance_sing}.
\end{corollary}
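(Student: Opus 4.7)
The plan is to use the CQC hypothesis to show that the {\L}ojasiewicz pair $(L_{F,G}, \cst_{F,G})$ appearing in the hypothesis of \Cref{thm::effective_putinar} can be taken with $L_{F,G}=1$ and with an explicit bound on $\cst_{F,G}$, then simply substitute into the general bound.

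First, under the CQC assumption, \Cref{thm:main_theorem} applies and yields
\[
E(x) \le \cst_{E,G}\, G(x) \qquad \forall x \in D,
\]
so the {\L}ojasiewicz exponent between $E$ and $G$ is $L_{E,G}=1$, with the explicit constant given in \Cref{thm:main_theorem} (or, alternatively, the distance-to-singular version in \Cref{thm::CQC_distance_sing}). Next, the Markov-based inequality \eqref{eq::F_E} gives $F(x) \le 2d^2 E(x)$ on $D$, where $d=\deg(f)$. Composing these two bounds yields
\[
F(x) \le 2d^2\, \cst_{E,G}\, G(x) \qquad \forall x \in D,
\]
so that {\L}ojasiewicz inequality \eqref{eq:loja ineq} holds with $L_{F,G}=1$ and $\cst_{F,G} \le 2d^2\cst_{E,G}$.

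It then suffices to plug $L=1$ into the bound
$$m = O(n^2\, r\, d(\vb g)^6\, \cst^7\, \eps^{-(7L+3)})$$
provided by \Cref{thm::effective_putinar}, giving the claimed exponent $\eps^{-10}$ and keeping $\cst = \cst_{F,G}$ (which the statement absorbs into the single symbol $\cst$, with the understanding that it may be estimated as above). There is no real obstacle: the only point that requires a small check is that \eqref{eq::F_E} depends on $d=\deg(f)$, so the $d^2$ factor enters $\cst$ rather than appearing separately in the $m$ bound; this is consistent with the phrasing of the corollary, which lets $\cst$ be bounded via \eqref{eq::F_E} together with \Cref{thm:main_theorem} or \Cref{thm::CQC_distance_sing}.
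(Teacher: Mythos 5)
Your proposal is correct and follows exactly the route the paper intends (the paper leaves this corollary essentially proofless, pointing to \Cref{subsec::loja_distance}): under CQC, \Cref{thm:main_theorem} gives $L_{E,G}=1$, composing with \eqref{eq::F_E} yields $F(x)\le 2d^2\cst_{E,G}\,G(x)$, and substituting $L=1$ into \Cref{thm::effective_putinar} produces the exponent $\eps^{-10}$ with the $d^2$ factor absorbed into $\cst$, consistent with the corollary's phrasing.
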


Notice that in \Cref{cor::effective_putinar_cqc} the exponent is \emph{independent} of $f$. In this case, the analysis performed to estimate the \L ojasiewicz exponent is then necessarily 
connecting the distance function $D$ and the euclidean distance $E$, rather than connecting directly 
$F$ and $G$.

The simplest case where we can apply \Cref{cor::effective_putinar_cqc} is when $S$ is the unit ball defined by the single polynomial $g = 1- \norm{\vb X}_2^2$.  This case can be analyzed, by specializing a general result, in \cite[cor.~1]{mai_complexity_2022}, where the authors prove a representation result for strictly positive polynomials (that includes furthermore a denominator) with degree of order $\epsilon^{-65}$.
In this case, \Cref{cor::effective_putinar_cqc} naturally gives a representation with order $\epsilon^{-10}$, improving \cite[cor.~1]{mai_complexity_2022}. In the case of the unit ball, to the best of our knowledge the best available result gives a bound of the order $\epsilon^{-1/2}$, and it is developed with a specific technique in \cite{slot_sum--squares_2021}.

As a perspective of this work, we would like to investigate the tightness of the bound. 
We can notice that if $f - f^*\in \cQ(\vb g)$  with $f^*\ge 0$ then $f \in \cQ_\ell(\vb g)$ for some $\ell \in \N$ and the bound on $\ell$ should not depend on $\epsilon$.  In this case, we see that $L \le 1$ as shown in the following proposition.
\begin{prop}\label{prop:final_prop}
    Assume that $f - f^* = s_0 + s_1 g_1 +\dots + s_r g_r \in \cQ(\vb g)$
    and let $F$, $G$ be as in \Cref{sec::loja}.
    Then:
    \[
        F(x) \le \cst G(x) \text{ for all } x \in D,
    \]
    where $\cst =\frac{1}{\norm{f}} \max \{ \, \sum_{i = 1}^r \norm{g_i} s_i(x) \colon x \in 
    D \, \}$.
\end{prop}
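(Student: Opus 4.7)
The plan is to proceed by a direct case analysis on the sign of $f(x)-f^*$, exploiting the given SoS representation together with the non-negativity of the coefficients $s_0, \dots, s_r \in \Sigma^2$.

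First, if $f(x) \ge f^*$ then by definition $F(x) = 0$, and the inequality $F(x) \le \cst G(x)$ holds trivially since $G(x) \ge 0$ for all $x \in D$. So the content of the proposition lies in the complementary case $f(x) < f^*$, where $F(x) = (f^* - f(x))/\|f\|_B$. Plugging in the representation $f - f^* = s_0 + \sum_{i=1}^r s_i g_i$ and using $s_0(x) \ge 0$, I would obtain
\[
f^* - f(x) \;\le\; -\sum_{i=1}^r s_i(x)\, g_i(x).
\]

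The key step is then to replace each $-s_i(x) g_i(x)$ by a non-negative quantity controlled by $G(x)$. Since $s_i(x) \ge 0$, we have $s_i(x) g_i(x) \ge s_i(x)\min(g_i(x),0)$, hence
\[
-s_i(x) g_i(x) \;\le\; s_i(x)\,\max(-g_i(x),0).
\]
By the definition of $G$, one has $\max(-g_i(x),0)/\|g_i\|_B \le G(x)$, so $\max(-g_i(x),0) \le \|g_i\|_B\, G(x)$. Summing over $i$ and dividing by $\|f\|_B$ yields
\[
F(x) \;\le\; \frac{1}{\|f\|_B}\,G(x) \sum_{i=1}^r \|g_i\|_B\, s_i(x) \;\le\; \cst\, G(x),
\]
by taking the maximum over $x \in D$ of $\sum_{i=1}^r \|g_i\|_B s_i(x)$ in the definition of $\cst$.

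There is no real obstacle here: the proof is a short manipulation from the definitions of $F$ and $G$, using only that the $s_i$ are sums of squares (hence non-negative pointwise) and that $G$ dominates each normalized $\max(-g_i,0)$ by construction. The maximum in the definition of $\cst$ is finite because each $s_i$ is a polynomial and $D$ is compact, so the bound is well-defined.
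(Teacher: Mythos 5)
Your proof is correct and follows essentially the same route as the paper: the same case split on the sign of $f(x)-f^*$, discarding $s_0$ and the terms with $g_i(x)\ge 0$ by positivity of the $s_i$, and bounding each remaining $-g_i(x)/\norm{g_i}_B$ by $G(x)$. Your use of $\max(-g_i,0)$ over all indices is just a notational variant of the paper's restriction to the active index set $I_-(x)$.
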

\begin{proof}
Let $f -f^* = s_0 + s_1 g_1 +\dots + s_r g_r \in \cQ(\vb g)$ with $s_i \in \Sigma^2$ and $x \in D$. There are two cases.

If $f(x) > f^*$ then $0 = F(x) \le \cst G(x)$ for any $\cst \in \RR_{>0}$.

If $f(x) \le f^*$ then $F(x) =\frac{f^* - f(x)}{\norm{f}} $. Therefore, if $\cst =\frac{1}{\norm{f}} \max \{ \, \sum_{i = 1}^r \norm{g_i} s_i(x) \mid x \in \Delta \, \}$ and $I_-(x) = \{ i \in \{1,\dots , r \colon g_i(x) \le 0\}$ we have:
\begin{align*}
    F(x) & = \frac{1}{\norm{f}}(f(x) - f^*)= - \frac{1}{\norm{f}} \big( s_0(x) + s_1(x) g_1(x) +\dots + s_r(x) g_r(x)\big) \\
    & \le \frac{1}{\norm{f}} \sum_{i \in I_-(x)} s_i(x) (-g_i(x)) = \frac{1}{\norm{f}} \sum_{i \in I_-(x)}\norm{g_i} s_i(x) \big(-\frac{g_i(x)}{\norm{g_i}} \big) \le \cst G(x).
\end{align*}
This shows that $F(x) \le \cst G(x)$ for all $x \in D$.
\end{proof}
This proposition suggests that the exponent of $\epsilon$ in a tight bound for the Effective Positivstellensatz  should vanish when $L=1$.

A particular case when $f-f^* \in \cQ_\ell(\vb g)$ is given by the so called \emph{Boundary Hessian Conditions} (BHC),
introduced by Marshall in \cite{marshall_representations_2006}. It would be interesting to see if, conversely, $L \le 1$ implies regularity conditions such as BHC and so that $f-f^* \in \cQ_\ell(\vb g)$. 

Another direction for future investigations is the analysis of worst case bounds in terms of the bit size and degree of the input polynomials with rational coefficients and to compare these bounds with those in \cite{LombardiElementaryRecursiveBound2020}. 

\appendix

\section{Approximation properties}

In this appendix, we recall and adapt to our context known approximation properties of continuous functions, focusing on our scaled simplex $D$. 

Let $\theta: x \in \Delta\mapsto (n+\sqrt{n})\, x - \vb 1 \in D$ be the affine map, which transforms the unit simplex $\Delta = \{ \, x \in \RR^n \mid x_i \ge 0 \text{ and } 1 - \sum_i x_i \ge 0 \, \}$ into $D$. For $m\in \NN$ and  $\psi \in C^0(D)$, consider the Bernstein operator 
\begin{equation}\label{def:bernstein op}
\ca B^D_m(\psi;x)= \sum_{\alpha\in \N^n,|\alpha|\le m} \psi\left(\theta(\frac{\alpha}{m})\right) B^{D}_{m,\alpha}(x)
\end{equation}

Notice that $\ca B_m^{D}$ is positive linear operator on $C^0(D)$, i.e. if $\forall x\in D, \psi(x)\ge 0$, then $\forall x\in D, \ca B^D_m(\psi;x)\ge 0$.
Moreover $\ca B^D_m$ reproduces constants and linear functions.

\begin{lemma}[{\cite[Lemma 4]{NewmanJacksonTheoremHigher1964}}]\label{lem:kernel}
For a positive linear operator $A: C^0(D) \rightarrow C^0(D)$, $\psi\in C^0(D)$ and $t>0$ and $x\in D$, we have
$$
| \psi(x) - A(\psi;x)| \le \omega(\psi;t) \big(1 + \frac 1 t \, A(\| \cdot -x\|_2^2;x)^{1\over 2}\big)
$$
where $\omega(\psi;\cdot)$ is the modulus of continuity of $\psi$.
\end{lemma}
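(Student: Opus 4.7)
The plan is to follow the classical Korovkin--type argument, exploiting two structural features of the operator: positivity and the reproduction of constants (implicit here, since Newman--Jackson's statement is applied to Bernstein-type operators satisfying $A(1;x)=1$; this is true for $\ca B^D_m$ by \eqref{def:bernstein op}). The starting identity is
\[
\psi(x) - A(\psi;x) = A\bigl(\psi(x)\cdot 1 - \psi(\cdot);x\bigr),
\]
so that by positivity of the functional $\varphi\mapsto A(\varphi;x)$ one immediately gets $|\psi(x)-A(\psi;x)|\le A\bigl(|\psi(x)-\psi(\cdot)|;x\bigr)$.

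Next I would invoke the sub-additivity property of the modulus of continuity, namely the standard estimate $\omega(\psi;\lambda s)\le (1+\lambda)\,\omega(\psi;s)$ for $\lambda\ge 0$, $s>0$. Applied with $s=t$ and $\lambda=\|y-x\|_2/t$, this yields the pointwise bound
\[
|\psi(x)-\psi(y)| \le \omega(\psi;t)\Bigl(1+\tfrac{1}{t}\|y-x\|_2\Bigr)\qquad(y\in D).
\]
Feeding this into the positive operator $A(\cdot;x)$ and again using $A(1;x)=1$ gives
\[
|\psi(x)-A(\psi;x)| \le \omega(\psi;t)\Bigl(1+\tfrac{1}{t}\,A(\|\cdot-x\|_2;x)\Bigr).
\]

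To finish, I would pass from the first to the second moment by Cauchy--Schwarz for the positive linear functional $\varphi\mapsto A(\varphi;x)$: writing $\|\cdot-x\|_2 = \|\cdot-x\|_2 \cdot 1$ and applying $A(uv;x)^2\le A(u^2;x)\,A(v^2;x)$, one obtains
\[
A(\|\cdot-x\|_2;x)\le A(\|\cdot-x\|_2^2;x)^{1/2}\,A(1;x)^{1/2} = A(\|\cdot-x\|_2^2;x)^{1/2},
\]
which, substituted above, yields the claimed inequality.

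The only subtle step is the last one: one has to know that a positive linear functional on $C^0(D)$ obeys a Schwarz-type inequality. This follows by the usual trick of expanding $A((u+\lambda v)^2;x)\ge 0$ in $\lambda$ and optimizing; alternatively, by Riesz representation the functional is integration against a nonnegative Radon measure on $D$, and then Cauchy--Schwarz is the classical one. Everything else (sub-additivity of $\omega(\psi;\cdot)$, positivity, and the identity using $A(1;x)=1$) is routine, so I expect no real obstacle beyond making the use of $A(1;x)=1$ explicit.
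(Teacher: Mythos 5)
Your proof is correct. The paper does not prove this lemma at all---it is quoted verbatim from Newman--Shapiro---and your argument is the standard Korovkin/Shisha--Mond one that underlies the cited result: positivity plus $A(1;x)=1$ reduces the problem to estimating $A(|\psi(x)-\psi(\cdot)|;x)$, the subadditivity bound $\omega(\psi;\lambda t)\le(1+\lambda)\,\omega(\psi;t)$ handles the integrand, and the Cauchy--Schwarz inequality for the positive functional $\varphi\mapsto A(\varphi;x)$ converts the first moment into the square root of the second. You are also right to flag that $A(1;x)=1$ is a genuine extra hypothesis not visible in the statement as reproduced here (without it the inequality fails, e.g.\ for $A=0$ and $\psi$ a nonzero constant); it holds for the Bernstein operator $\mathcal B^D_m$ since the basis $(B^D_{m,\alpha})_{|\alpha|\le m}$ is a partition of unity, so the application in Theorem~\ref{thm:approx} is unaffected.
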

Using this lemma for positive linear operators, we deduce the following approximation for the Bernstein operator: 
\begin{theorem}\label{thm:approx}
For $\psi \in C^0(D)$, 
$$
|\psi(x)- \ca B_{m}^D(\psi;x)| \le  2 \, \omega(\psi; \frac{2n} {\sqrt{m}}).
$$
\end{theorem}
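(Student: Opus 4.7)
The plan is to apply \Cref{lem:kernel} to the positive linear operator $A = \ca B_m^D$ and then estimate the second moment $\ca B_m^D(\|\cdot - x\|_2^2; x)$ by pulling back to the unit simplex via $\theta$, where it becomes a standard multinomial variance computation. Choosing $t = 2n/\sqrt{m}$ will match the bracket $(1 + \tfrac{1}{t}\,A(\|\cdot-x\|_2^2;x)^{1/2})$ to the constant $2$ in the statement.

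First I would note that, since $\theta : \Delta \to D$ is affine and bijective, setting $y = \theta^{-1}(x)$ identifies $B_{m,\alpha}^D(x)$ with the standard Bernstein polynomial $B_{m,\alpha}(y) = \binom{m}{\alpha} y_1^{\alpha_1}\cdots y_n^{\alpha_n}(1-\sum_i y_i)^{m-|\alpha|}$ on $\Delta$. Because $\theta$ is an isometry up to the scaling factor $(n+\sqrt{n})$, the points $\theta(\alpha/m)$ satisfy
\[
\|\theta(\alpha/m) - x\|_2^2 \;=\; (n+\sqrt{n})^2\,\|\alpha/m - y\|_2^2,
\]
so
\[
\ca B_m^D(\|\cdot - x\|_2^2; x) \;=\; (n+\sqrt{n})^2 \sum_{|\alpha|\le m} \|\alpha/m - y\|_2^2\, B_{m,\alpha}(y).
\]

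Next I would evaluate the inner sum probabilistically. The weights $B_{m,\alpha}(y)$ are exactly the probabilities of a multinomial distribution with parameters $(m; y_1,\dots,y_n, 1-\sum_i y_i)$. Hence, writing $X$ for such a random vector,
\[
\sum_{|\alpha|\le m} \|\alpha/m - y\|_2^2\, B_{m,\alpha}(y) \;=\; \mathbb{E}\bigl[\|X/m - y\|_2^2\bigr] \;=\; \frac{1}{m}\sum_{i=1}^n y_i(1-y_i) \;\le\; \frac{1}{m},
\]
since $\sum_i y_i(1-y_i) \le \sum_i y_i \le 1$ on $\Delta$. Combining with the scaling factor gives
\[
\ca B_m^D(\|\cdot - x\|_2^2; x)^{1/2} \;\le\; \frac{n+\sqrt{n}}{\sqrt{m}} \;\le\; \frac{2n}{\sqrt{m}},
\]
using $\sqrt{n}\le n$ for $n\ge 1$.

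Finally, substituting into \Cref{lem:kernel} with the choice $t = 2n/\sqrt{m}$ yields $\tfrac{1}{t}\,\ca B_m^D(\|\cdot-x\|_2^2;x)^{1/2} \le 1$, so
\[
|\psi(x) - \ca B_m^D(\psi;x)| \;\le\; \omega(\psi; 2n/\sqrt{m})\,(1+1) \;=\; 2\,\omega(\psi; 2n/\sqrt{m}),
\]
as claimed. The only real step that requires care is the second-moment computation; everything else is bookkeeping about the affine change of variables $\theta$.
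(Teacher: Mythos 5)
Your proposal is correct and follows essentially the same route as the paper: apply \Cref{lem:kernel} with $A=\ca B_m^D$ and $t=2n/\sqrt{m}$, after bounding the second moment $\ca B_m^D(\|\cdot-x\|_2^2;x)$ by $(n+\sqrt{n})^2/m\le 4n^2/m$. Your multinomial-variance computation of that second moment on the pullback to $\Delta$ is a clean (and arguably tidier) substitute for the paper's algebraic identity, and it arrives at the same bound.
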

\begin{proof}
First, using the property of the Bernstein operator on the unit simplex $\Delta$, we verify that 
$$
\|\theta^{-1}(x)\|_2^2 = \frac 1 {m(m-1)} \sum_{|\alpha|\le m} (\|\alpha\|_2^2 - m) B_{m,\alpha}^D(x) = \frac {m} {m-1} \ca B_m^{D}(\|\theta^{-1}(\cdot) \|_2^2;x) - \frac 1 {m-1}
$$
where $\theta^{-1}: x\in D \mapsto \frac{1} {n+\sqrt{n}} (x+\vb 1)$.
Therefore, we have
$$
\ca B_{m,\alpha}^D(\|\theta^{-1}(\cdot) \|_2^2, x) = \frac{1}{ (n+\sqrt{n})^2} \ca B_{m,\alpha}^D(\|\cdot + \vb 1 \|_2^2, x) =  \frac{m-1}{m} \|\theta^{-1}(x)\|_2^2 + \frac 1 {m}= \frac{m-1}{m} \|\frac{x+\vb 1}{ n+\sqrt{n}} \|_2^2 + \frac 1 {m}
$$
so that
$$
\ca B_{m}^D(\|\cdot + \vb 1 \|_2^2, x) =  \frac{m-1}{m} \|x+\vb 1\|_2^2 + \frac{(n+ \sqrt{n})^2} m.
$$
Since $\ca B_m^{D}$ reproduces affine functions, we have 
\begin{align*}
\ca B_m^{D}(\|\cdot -x\|_2^2;x) & = \ca B_m^{D}(\|(\cdot+ \vb 1) - (x+\vb 1)\|_2^2;x) \\
& = \frac{m-1}{m} \|x+ \vb 1\|_2^2 + \frac {(n+\sqrt{n})^2} {m} - 2 \|x+\vb 1\|_2^2 + \|x+\vb 1\|_2^2 \\ &= \frac 1 {m} ((n+\sqrt{n})^2 - \|x+ \vb 1\|^2 ) \le \frac {(n+\sqrt{n})^2} {m} \le 4 \frac {n^2} m
\end{align*}
for $x\in D$.
By \cref{lem:kernel}, we deduce that for $x\in D$,
$$
| \psi(x) - \ca  B_m^{D}(\psi;x)| \le 2\, \omega(\psi;\frac{2 \, n} {\sqrt{m}}) 
$$ 
choosing $t= \frac{2\, n} {\sqrt{m}}$.
\end{proof}

We recall now an effective version of Polya's theorem.
\begin{theorem}[{\cite[Th. 1]{PowersnewboundPolya2001}}]\label{thm:polya}
Let $p= \sum_{\abs{\beta} = d} p_{d,\beta} \binom{d}{\beta} X_0^{\beta_0} \dots X_n^{\beta_n}$ be an homogeneous polynomial of degree  $d = \deg(p)$. If $\forall x = (x_0, \dots, x_n) \in \RR^{n+1}$ such that $x_i \ge 0$ and $\sum_{i=0}^n x_i =1$, we have $p(x)\ge p^*>0$, then $(X_0+\dots+X_n)^m p$ has non negative coefficients in the monomial basis if
\[
    m \ge \frac{d(d-1)}{2} \frac{\max_\beta \abs{p_{d,\beta}}}{p^*} - d
\]
\end{theorem}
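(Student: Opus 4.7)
The plan is to identify each coefficient of $(X_0+\cdots+X_n)^m p(X)$ in the monomial basis with the corresponding Bernstein coefficient of $p$, regarded as a homogeneous polynomial of degree $m+d$ on the probability simplex, and then to bound the gap between this Bernstein coefficient and the value of $p$ at the associated simplex node by an explicit, dimension--independent error term of order $\binom{d}{2}/(m+d)\cdot \max_\beta|p_{d,\beta}|$. Once this error falls below $p^*$, every monomial coefficient is forced to be non-negative.

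First, I would expand $(X_0+\cdots+X_n)^m p(X)$ via the multinomial theorem: the coefficient of $X^\gamma$, with $|\gamma|=m+d$, equals
\[
c_\gamma \;=\; \sum_{|\beta|=d} p_{d,\beta}\binom{d}{\beta}\binom{m}{\gamma-\beta},
\]
with the convention $\binom{m}{\gamma-\beta}=0$ unless $\beta_i\le\gamma_i$ for all $i$. Writing $M:=m+d$, $\gamma^{\underline{\beta}}:=\prod_i \gamma_i(\gamma_i-1)\cdots(\gamma_i-\beta_i+1)$, and $M^{\underline{d}}:=M(M-1)\cdots(M-d+1)$, the elementary identity $\binom{m}{\gamma-\beta}=\binom{M}{\gamma}\,\gamma^{\underline{\beta}}/M^{\underline{d}}$ recasts this as
\[
c_\gamma \;=\; \binom{M}{\gamma}\,b_\gamma(p), \qquad b_\gamma(p) \;:=\; \frac{1}{M^{\underline{d}}}\sum_{|\beta|=d} p_{d,\beta}\binom{d}{\beta}\gamma^{\underline{\beta}},
\]
so that non-negativity of $c_\gamma$ reduces to non-negativity of $b_\gamma(p)$.

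Next, setting $y:=\gamma/M$ (which satisfies $y_i\ge 0$ and $\sum_i y_i=1$), we have $p(y)=M^{-d}\sum_\beta p_{d,\beta}\binom{d}{\beta}\gamma^\beta\ge p^*$. Adding and subtracting $M^{-d}\sum_\beta p_{d,\beta}\binom{d}{\beta}\gamma^{\underline{\beta}}$ yields
\[
p(y) - b_\gamma(p) \;=\; A_\gamma + b_\gamma(p)\Bigl(\tfrac{M^{\underline{d}}}{M^d} - 1\Bigr), \qquad A_\gamma \;:=\; \frac{1}{M^d}\sum_{|\beta|=d} p_{d,\beta}\binom{d}{\beta}\bigl(\gamma^\beta - \gamma^{\underline{\beta}}\bigr),
\]
which solves to $b_\gamma(p) = (M^d/M^{\underline{d}})\bigl(p(y) - A_\gamma\bigr)$.

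The crucial step is the combinatorial identity
\[
\sum_{|\beta|=d}\binom{d}{\beta}\gamma^{\underline{\beta}} \;=\; M^{\underline{d}},
\]
proved by counting ordered $d$-element selections without replacement from a set of $M$ elements partitioned into blocks of sizes $\gamma_0,\dots,\gamma_n$. Together with its trivial analogue $\sum_\beta\binom{d}{\beta}\gamma^\beta=M^d$ and the elementary estimate $M^d-M^{\underline{d}}\le \binom{d}{2}M^{d-1}$ (which follows from $1-\prod_{j=1}^{d-1}(1-j/M)\le \sum_{j=1}^{d-1}j/M$), this gives
\[
|A_\gamma| \;\le\; \max_\beta|p_{d,\beta}|\cdot \frac{M^d - M^{\underline{d}}}{M^d} \;\le\; \binom{d}{2}\frac{\max_\beta|p_{d,\beta}|}{M}.
\]
Since $M^d/M^{\underline{d}}\ge 1$, one concludes $b_\gamma(p)\ge p^* - |A_\gamma| \ge 0$ as soon as $M\ge \binom{d}{2}\max_\beta|p_{d,\beta}|/p^*$, i.e.\ $m\ge \tfrac{d(d-1)}{2}\max_\beta|p_{d,\beta}|/p^* - d$, which is the claimed bound. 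The main technical point is securing the \emph{dimension-independent} constant $\binom{d}{2}$: the identity $\sum_\beta\binom{d}{\beta}\gamma^{\underline{\beta}}=M^{\underline{d}}$ is what allows the error to be bounded uniformly by $\max_\beta|p_{d,\beta}|$ rather than incurring an avoidable factor of $\sum_\beta\binom{d}{\beta}=(n+1)^d$.
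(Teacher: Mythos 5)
Your argument is correct and is essentially the standard Powers--Reznick proof of the effective P\'olya theorem: the paper itself gives no proof of this statement, citing it directly as Theorem~1 of \cite{PowersnewboundPolya2001}, and your reconstruction (identifying the monomial coefficients of $(X_0+\dots+X_n)^m p$ with falling-factorial averages of $p$ at the simplex nodes $\gamma/(m+d)$, using $\sum_{|\beta|=d}\binom{d}{\beta}\gamma^{\underline{\beta}}=M^{\underline{d}}$ to get the dimension-free error bound $\binom{d}{2}\max_\beta|p_{d,\beta}|/M$) matches that reference's argument. All steps check out, including the sign of $\gamma^\beta-\gamma^{\underline{\beta}}\ge 0$ needed to pull out $\max_\beta|p_{d,\beta}|$, so the claimed threshold on $m$ follows exactly.
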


We can dehomogenize \Cref{thm:polya} setting $X_0 = 1 - X_1 - \dots - X_n$ and restate it using the Bernstein basis and norm, as follows.
\begin{corollary}\label{cor:polya1}
    Let $p \in \RRg$ be a polynomial of degree $d$. If $p \ge p^* > 0$ on $\Delta = \{ \, x \in \RR^n \mid x_i \ge 0, 1-\sum_i x_i \ge 0 \, \}$, then $p$ has non negative coefficients in the Bernstein basis $(B^\Delta_{m, \alpha})_{\abs{\alpha} \le m}$ if
    \[
        m \ge \frac{d(d-1)}{2} \frac{\norm{p}_{B}}{p^*} - d
    \]
\end{corollary}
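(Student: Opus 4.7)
The plan is to reduce Corollary \ref{cor:polya1} to Theorem \ref{thm:polya} by dehomogenizing via the substitution $X_0\mapsto 1-X_1-\cdots-X_n$. This substitution identifies the Bernstein (= normalized monomial) basis on the standard $(n+1)$-simplex $\widehat{\Delta}=\{x\in\RR^{n+1}:x_i\ge 0,\ \sum_{i=0}^n x_i=1\}$ in degree $m$ with the Bernstein basis $(B^\Delta_{m,\alpha})_{\abs{\alpha}\le m}$ on $\Delta$: concretely, the homogeneous element $\binom{m}{\beta}X^\beta$ with $\beta=(m-\abs{\alpha},\alpha)$ is mapped to $\binom{m}{m-\abs{\alpha},\alpha}(1-\sum X_i)^{m-\abs{\alpha}}X^\alpha=B^\Delta_{m,\alpha}(X)$.

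The first step is to observe that, for each $m\ge d$, the dehomogenization $X_0\mapsto 1-\sum X_i$ is a linear bijection from degree-$m$ homogeneous polynomials in $\RR[X_0,\dots,X_n]$ onto polynomials of total degree $\le m$ in $\RR[X_1,\dots,X_n]$ (surjective via the explicit Bernstein lift, then bijective by equality of dimensions $\binom{m+n}{n}$). Writing $p=\sum_{\abs{\alpha}\le m}p_{m,\alpha}B^\Delta_{m,\alpha}$, its unique homogeneous lift is
\begin{equation*}
  \widetilde{P}_m(X_0,\dots,X_n)=\sum_{\abs{\alpha}\le m}p_{m,\alpha}\binom{m}{m-\abs{\alpha},\alpha}X_0^{m-\abs{\alpha}}X^\alpha,
\end{equation*}
so $p$ has nonnegative Bernstein coefficients in degree $m$ on $\Delta$ if and only if $\widetilde{P}_m$ has nonnegative coefficients in the monomial basis on $\widehat{\Delta}$. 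Because $X_0+\sum X_i$ maps to $1$ under the substitution, uniqueness further yields $\widetilde{P}_m=(X_0+\cdots+X_n)^{m-d}\widetilde{P}_d$ for every $m\ge d$.

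The second step is to check the hypotheses of Theorem \ref{thm:polya} for $\widetilde{P}_d$. The projection $\widehat{\Delta}\to\Delta$, $(x_0,\dots,x_n)\mapsto(x_1,\dots,x_n)$, is a bijection under which $\widetilde{P}_d$ pulls back to $p$, so $\widetilde{P}_d\ge p^*>0$ on $\widehat{\Delta}$ by hypothesis; moreover the Bernstein coefficients of $\widetilde{P}_d$ on $\widehat{\Delta}$ are exactly the $p_{d,\alpha}$, so $\max_\beta\abs{\widetilde{P}_{d,\beta}}=\norm{p}_B$. Theorem \ref{thm:polya} applied to $\widetilde{P}_d$ with multiplier exponent $m-d$ then gives that $\widetilde{P}_m=(X_0+\cdots+X_n)^{m-d}\widetilde{P}_d$ has nonnegative monomial coefficients once $m-d\ge\frac{d(d-1)}{2}\frac{\norm{p}_B}{p^*}-d$, which translates via the correspondence above to the stated bound on the Bernstein coefficients of $p$ in degree $m$ on $\Delta$.

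The only delicate point is the combinatorial identification of the Bernstein basis on $\Delta$ with the monomial/Bernstein basis on $\widehat{\Delta}$ under $X_0\mapsto 1-\sum X_i$; once this correspondence and the uniqueness of $\widetilde{P}_m$ are in place, Theorem \ref{thm:polya} closes the argument immediately.
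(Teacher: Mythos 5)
Your argument is exactly the dehomogenization the paper has in mind (the paper offers no separate proof of \Cref{cor:polya1}, only the remark that it is \Cref{thm:polya} restated via $X_0\mapsto 1-\sum_i X_i$), and every step of the correspondence you set up --- the identification $B^\Delta_{m,\alpha}\leftrightarrow\binom{m}{\beta}X^\beta$ with $\beta=(m-\abs{\alpha},\alpha)$, the uniqueness of the lift, the identity $\widetilde P_m=(X_0+\cdots+X_n)^{m-d}\widetilde P_d$, and the equality $\max_\beta\abs{\widetilde P_{d,\beta}}=\norm{p}_B$ --- is correct.

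The one flaw is the final line of bookkeeping. Applying \Cref{thm:polya} with multiplier exponent $m-d$ gives the condition $m-d\ge\frac{d(d-1)}{2}\frac{\norm{p}_B}{p^*}-d$, which is equivalent to $m\ge\frac{d(d-1)}{2}\frac{\norm{p}_B}{p^*}$, \emph{not} to the stated $m\ge\frac{d(d-1)}{2}\frac{\norm{p}_B}{p^*}-d$: the ``$-d$'' in the theorem refers to the multiplier exponent, while the ``$m$'' of the corollary is the total Bernstein degree, so the $-d$ should cancel rather than survive. Your proof therefore establishes the conclusion only under the slightly stronger hypothesis $m\ge\frac{d(d-1)}{2}\frac{\norm{p}_B}{p^*}$. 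To be fair, this off-by-$d$ is present in the paper's own statement of \Cref{cor:polya1}; it is harmless for everything downstream, since \Cref{cor:polya2} uses the coarser threshold $m\ge d^2\frac{\norm{p}_B}{p^*}$, which dominates $\frac{d(d-1)}{2}\frac{\norm{p}_B}{p^*}$. But you should either state the corollary with the corrected threshold or explicitly note the discrepancy rather than assert that the derived condition ``translates to the stated bound''.
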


\Cref{cor:polya1} can be seen as a result of the convergence of the control polygon to the graph of the polynomial $p$. Finally, we deduce from \Cref{cor:polya1} an analogous result for the case of the scaled simplex $D = \theta(\Delta) = \{x\in \R^n\mid 1+ x_1\ge 0, \ldots, 1+x_n\ge 0, \sqrt{n} - x_1- \cdots- x_n\ge 0\}$. In order do to that, notice that $B^D_{m, \alpha}(\theta(x)) = B^\Delta_{m, \alpha}(x)$ for all 
$x \in \Delta$ and $\alpha \in \N^n$ such that $\abs{\alpha} \le m$.
Furthermore, we state \Cref{cor:polya2} with a worst but simplified constant that will be more convenient in the next sections.

\begin{corollary}\label{cor:polya2}
Let $p= \sum_{\abs{\alpha} \le  m} p_{m,\alpha} B_{m,\alpha}^{D}(\vb X) \in \RRg$ with $m \ge d= \deg(p)$. If $\forall x\in D$ we have $p(x)\ge p^*>0$ and $m \ge d^2 \frac {\|p\|_{B}}{p^*}$, then $p_{m,\alpha}\ge 0$ for all $\alpha\in \N^{n}$ with $|\alpha|\le m$.
\end{corollary}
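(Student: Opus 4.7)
The plan is to transfer the statement from the scaled simplex $D$ to the unit simplex $\Delta$ via the affine change of coordinates $\theta$, apply \Cref{cor:polya1} there, and then verify that the hypothesis $m \ge d^2 \|p\|_B / p^*$ is at least as strong as the hypothesis needed by \Cref{cor:polya1}.

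Concretely, I would define $\tilde p \coloneqq p \circ \theta \in \RRg$, which has the same degree $d$ as $p$. The crucial ingredient, stated just before the corollary, is the identity $B^D_{k,\alpha}(\theta(x)) = B^\Delta_{k,\alpha}(x)$ for every $k$ and every $\alpha$ with $|\alpha|\le k$. Applying this identity in degree $m$ shows that the Bernstein coefficients of $\tilde p$ on $\Delta$ in degree $m$ coincide with the coefficients $p_{m,\alpha}$ of $p$ on $D$ in degree $m$. Applying it in degree $d$ shows that $\|\tilde p\|_B = \|p\|_B$. Since $\theta$ is a bijection $\Delta \to D$, the positivity hypothesis transports: $\tilde p(x) = p(\theta(x)) \ge p^* > 0$ for every $x \in \Delta$.

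With $\tilde p$ in hand, I would invoke \Cref{cor:polya1}: its conclusion is that the degree-$m$ Bernstein coefficients of $\tilde p$ on $\Delta$ are non-negative as soon as
\[
    m \ge \frac{d(d-1)}{2}\, \frac{\|\tilde p\|_B}{p^*} - d.
\]
The last step is to check that our hypothesis implies this. Since $\frac{d(d-1)}{2}\le d^2$ for all $d \ge 0$, and the $-d$ term only helps, we get
\[
    m \ge d^2\, \frac{\|p\|_B}{p^*} \ge \frac{d(d-1)}{2}\, \frac{\|p\|_B}{p^*} - d = \frac{d(d-1)}{2}\, \frac{\|\tilde p\|_B}{p^*} - d,
\]
so the condition of \Cref{cor:polya1} is satisfied and the coefficients $p_{m,\alpha}$ are non-negative, as required.

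There is no real obstacle here: the content is already done in \Cref{cor:polya1}, and the only point to be careful about is that both the Bernstein coefficients and the norm $\|\cdot\|_B$ are preserved under the affine change $\theta$, which follows from the single identity $B^D_{k,\alpha}\circ\theta = B^\Delta_{k,\alpha}$ applied in the two relevant degrees $k = d$ and $k = m$. Replacing the sharp constant $\tfrac{d(d-1)}{2}$ by the simpler $d^2$ is only a minor quantitative loss and is done for notational convenience in the sequel.
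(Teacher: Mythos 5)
Your proof is correct and follows exactly the route the paper intends: transporting \Cref{cor:polya1} from $\Delta$ to $D$ via the identity $B^D_{k,\alpha}\circ\theta = B^\Delta_{k,\alpha}$ (which preserves both the degree-$m$ coefficients and $\|\cdot\|_B$), and then noting that $m \ge d^2\|p\|_B/p^*$ implies the sharper threshold $\tfrac{d(d-1)}{2}\|p\|_B/p^* - d$. The paper leaves these details implicit, so nothing is missing or different in substance.
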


\printbibliography
\end{document}